  \def\@Opargbegintheorem#1#2#3#4{#4\trivlist
      \item[]{#3#2\@thmcounterend\ }}%
  \def\@Opargbegintheorem#1#2#3#4{#4\trivlist
      \item[\hskip\labelsep{#3#1}]{#3(#2)\@thmcounterend\ }}%
 \newtheorem{thm}{Theorem}[subsection]
 \newtheorem{cor}[thm]{Corollary}
 \newtheorem{lem}[thm]{Lemma}
 \newtheorem{prop}[thm]{Proposition}
 \newtheorem{defn}{Definition}[subsection]
 \newtheorem{exmp}{Example}[subsection]
 \newtheorem{rema}{Remark}[subsection]
\providecommand{\keywords}[1]
{
	\small	
	\textbf{\text{Keywords:}} #1
}
\providecommand{\subclass}[1]
{
	\small	
	\textbf{\text{MSC(2010):}} #1
}
\title{\Large An Algebraic Generalization of the Ramanujan Sum}
\author{\normalsize N. Uday Kiran\thanks{nudaykiran@sssihl.edu.in}  \\
	\begin{small}Department of Mathematics and Computer Science\end{small}\\
	\small Sri Sathya Sai Institute of Higher Learning, Puttaparthi, India.
}
\date{}
\begin{document}
\maketitle

\begin{center}        
        \textit{In honor of George Andrews and Bruce Berndt on the occasion of their 85th birthday.}
\end{center}

\vspace{0.1cm}
    
\begin{abstract}
Ramanujan sums have attracted significant attention in both mathematical and engineering disciplines due to their diverse applications. In this paper, we introduce an algebraic generalization of Ramanujan sums, derived through polynomial remaindering. This generalization is motivated by its applications in Restricted Partition Theory and Coding Theory. Our investigation focuses on the properties of these sums and expresses them as finite trigonometric sums subject to a coprime condition. Interestingly, these finite trigonometric sums with a coprime condition, which arise naturally in our context, were recently introduced as an analogue of Ramanujan sums by Berndt, Kim, and Zahaescu. Furthermore, we provide an explicit formula for the size of Levenshtein codes with an additional parity condition (also known as Shifted Varshamov-Tenengolts deletion correction codes), which have found many interesting applications in studying the Little-Offord problem, DNA-based data storage and distributed synchronization. Specifically, we present an explicit formula for a particularly important open case $\text{SVT}_{t,b}(s \pm \delta, 2s + 1)$ for $s$ or $s+1$ are divisible by $4$ and for small values of $\delta$.
\\
\\
		\keywords{ Ramanujan Sums $\cdot$ Finite Trigonometric Sums $\cdot$ Restricted $q$-Products $\cdot$ Levenshtein Codes $\cdot$ Shifted Varshamov-Tenengolts Codes}\\
		\subclass{1L03 $\cdot$  11H71 $\cdot$ 11P81}
\end{abstract}

\section{Introduction}
Using the notation in \cite{toth}, we denote a polynomial
\[
R_{k}(q) = \sum_{t=0}^{k-1} c_{k}(t) q^{t},
\]
where $c_{k}(t)$ is the Ramanujan (trigonometric) sum for $t$ an integer. In this paper, our motivation for a generalization of the Ramanujan sum is the elegant congruence relation \cite{uk}:
\begin{equation}\label{poly_rem_RS}
R_{k}(q) \equiv (q)_{k-1} \mod 1-q^{k},    
\end{equation}
where $(q)_{k}=(1-q)\cdots (1-q^{k})$ is the \textit{restricted $q$-product} for a positive integer $k$, and $(q)_{0}=1$ is defined by convention. These products frequently arise in the study of $q$-series and partition functions. 

Since $R_{k}(q)$ is a polynomial of degree $(k-1)$, it is indeed the remainder when $(q)_{k-1}$ is divided by $1-q^{k}$. The equation (\ref{poly_rem_RS}) also provides a combinatorial interpretation:
\begin{equation}\label{RS_Comb}
c_{k}(t) = \left(\begin{matrix}
    \text{number of even-sized} \\
    \text{subsets of } \mathcal{A}_{k} \text{ whose} \\
    \text{elements sum to } t \text{ mod } k
\end{matrix}\right) - \left(\begin{matrix}
    \text{number of odd-sized} \\
    \text{subsets of } \mathcal{A}_{k} \text{ whose} \\
    \text{elements sum to } t \text{ mod } k
\end{matrix}\right),
\end{equation}
where $\mathcal{A}_{k} = \{1, 2, \ldots, k-1\}$, and by convention, we consider the empty set to be even-sized. Surprisingly, although several generalizations of the Ramanujan sum have been considered in the literature, none of them are based on this combinatorial interpretation. 

In this paper, we present a combinatorial-based generalization of the Ramanujan sum using polynomial remaindering. Our approach focuses on $\mathcal{A}_{s}$ for various values of $s$ within the range $1 \leq s \leq k$. We denote this generalization as $\sigma^{(b)}_{k}(t;s)$, introducing two additional parameters: $s$ and $b$. A notable feature of our generalization is its linear recurrence property with respect to both $t$ and $s$.

The generalization \(\sigma^{(b)}_{k}(t;s)\) essentially consists of two types of finite trigonometric sums: one is a direct sum, while the other is subject to a coprime condition (similar to the one given in \cite{bruce} and the references therein). For \(1 \leq s \leq k\),
\[
\sigma^{(0)}_{k}(t;s) = \frac{1}{k}\sum_{j=0}^{k-1}\alpha^{-jt}(\alpha^{j})_{s-1} \quad \quad \text{ and } \quad \quad \sigma^{(1)}_{k}(t;s) = \frac{1}{k}\sum_{\substack{j=0 \\ (j,k)=1}}^{k-1}\alpha^{-jt}(\alpha^{j})_{s-1},
\]
where \(\alpha = e^{2\pi i/k}\). Thus, \(\sigma^{(1)}_{k}(t;s)\) has an additional coprime condition compared to \(\sigma^{(0)}_{k}(t;s)\). Furthermore, from the observation that \((\alpha^{j})_{k-1}=k\) if \(\gcd(j,k)=1\) and \(0\) otherwise, we have 
$
\sigma_{k}^{(b)}(t;k)=c_{k}(t),
$ that is, the Ramanujan sum is recovered in the case $s=k$.

Recently, in \cite{uk}, we observed that the values \(\sigma^{(1)}_{k}(t;s)\) serve as coefficients within the framework of Sylvester Wave Theory as applied to restricted partitions. In this work, we explore this generalization further and present a relevant formula in coding theory. It is well-known that Ramanujan sums play a crucial role in determining the size of a specific class of deletion codes known as Levenshtein codes \cite{bibak}. These codes are quite versatile, capable of correcting various types of errors, including deletions, insertions, bit reversals, and transpositions of adjacent bits (see \cite{bibak} and the references therein). Despite their usefulness in numerous applications such as the Little-Offord problem, DNA-based data storage and distributed synchronization, many problems related to these codes remain open. 

For positive integers $k,s$ with $1\leq s\leq k$, Levenshtein codes $L_{t}(s,k)$ is a set of binary codes $(b_{s-1},\cdots,b_{1})$ that correct single-bit deletions, defined by 
$$
\sum^{s-1}_{j=1} j b_{j} \equiv t \text{ mod } k
$$
Clearly, this congruence relation is associated with the distinct partitions over the set \(\mathcal{A}_{s}\) given above. Levenshtein introduced the concept of edit distance, showing that \(L_{t}(s, 2s + 1)\) can correct single deletion, insertion, transposition, and substitution errors. Bibak et al. \cite{bibak1} provided an explicit formula for \(|L_{t}(s, 2s + 1)|\), the cardinality of \(L_{t}(s, 2s + 1)\), in terms of the Ramanujan sums.

Levenshtein codes with the parity condition (also known as Shifted Varshamov-Tenengolts codes) $\text{SVT}_{t,b}(s;k)$ are a subset of $L_{t}(s,k)$ that include an additional parity condition, i.e., 
$$
\sum^{s-1}_{j=1} j b_{j} \equiv t \text{ mod } k \quad \text{ and } \quad \sum_{j=1}^{s-1} b_{j} \equiv b \text{ mod } 2.
$$
From this definition, we have $|L_{t}(s, 2s + 1)| = |\text{SVT}_{t,0}(s,k)| + |\text{SVT}_{t,1}(s,k)|$. The reason these codes are called ``shifted" is that they can correct a single deletion when the location of the deleted bit is known to be within certain consecutive positions. Bibak et al. \cite{bibak} provided a formula using trigonometric functions. In this work, we present an explicit formula for the significant case $\text{SVT}_{t,b}(s \pm \delta, 2s + 1)$ for small values of $\delta$ such as $0, 1, 2, 3$, with $s \equiv 0 \text{ or } 3 \mod 4$. The results for this case are unknown in the literature, and determining the scenario when $\delta > 0$ is particularly challenging. The elegance of our approach lies in deriving explicit formulas for these difficult cases using only linear recurrence and the quadratic Gauss sum, which is a novel contribution to this literature.

The remainder of the paper is organized into two sections. In Section \ref{sec_gen}, we discuss the definition, general properties, and a combinatorial interpretation of our generalization, along with the associated trigonometric sums. In Section \ref{sec_SVT}, we derive the size of $\text{SVT}_{t,b}(s, 2s + 1)$ for the cases where $4 \mid s$ or $4 \mid (s + 1)$.

\section{Our Generalization and its Properties}\label{sec_gen}

Let $(q)_{k}=(1-q)\cdots (1-q^{k})$ and $(q)_{0}=1$ be the restricted $q$-products. Our parametric generalization of the Ramanujan sum is based on polynomial remaindering.

\begin{defn}\label{defn_GRS}[Generalization of the Ramanujan Sum]
Let $b$ and $t$ be two non-negative integers, $s$ and $k$ be positive integers such that $1\leq s\leq k$. The generalized Ramanujan sum $\sigma^{(b)}_{k}(t;s)$ is given as the coefficients of the polynomial
$$
R^{(b)}_{k,s}(q) = \sum^{k-1}_{t=0} \sigma_{k}^{(b)}(t;s)q^{t} = \frac{1}{k^{b}}(q)^{b}_{k-1}(q)_{s-1} \pmod{1-q^{k}}.
$$
As \( R_{k,s}^{(b)}(q) \) is a polynomial of degree \( (k-1) \), it is, in fact, the remainder when divided by \( 1 - q^k \).
\end{defn}

As discussed in the Introduction, the parameter \( s \) extends the combinatorial meaning of the Ramanujan sum. In fact, \( s = k \) corresponds to the Ramanujan sum case, that is, \( \sigma_{k}^{(b)}(t;k) = c_{k}(t) \). Furthermore, the parameter \( b \) is crucial for generating two distinct types of finite trigonometric sums, particularly for small values of \( s \).

Similar to the Ramanujan sum, the sequence of values \( \sigma_{k}^{(b)}(t;s) \) with respect to \( t \) is a periodic sequence with period \( k \). Specifically, for \( t \geq 1 \), we have
\[
\sigma_{k}^{(b)}(t;s) = \sigma_{k}^{(b)}(t \% k; s),
\]
where \( \% \) denotes the remainder operator. A key feature of these sums is that they satisfy a generalized Pascal-like linear recurrence relation.

\begin{thm}[Linear Recurrence]\label{linear_recurrence}
For $k\geq 1$ and $1\leq s<k$ we have 
$$
\sigma_{k}^{(b)}(t;s+1)=\sigma_{k}^{(b)}(t;s)-\sigma_{k}^{(b)}(t-s;s).
$$    
\end{thm}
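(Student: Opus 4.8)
The plan is to exploit the multiplicative structure of the restricted $q$-product, namely the elementary identity $(q)_s = (q)_{s-1}(1-q^s)$, together with the fact that reduction modulo $1-q^k$ is a ring homomorphism on $\mathbb{Q}[q]$, so that congruences may be multiplied freely. Working in the admissible range $1 \le s < k$, where both $R^{(b)}_{k,s}$ and $R^{(b)}_{k,s+1}$ are defined, I would first observe straight from the definition that
$$
R^{(b)}_{k,s+1}(q) \equiv \frac{1}{k^b}(q)^b_{k-1}(q)_s \equiv \frac{1}{k^b}(q)^b_{k-1}(q)_{s-1}(1-q^s) \pmod{1-q^k}.
$$
Since $\frac{1}{k^b}(q)^b_{k-1}(q)_{s-1} \equiv R^{(b)}_{k,s}(q) \pmod{1-q^k}$ by definition, multiplying this congruence through by $(1-q^s)$ collapses everything into the single compact relation
$$
R^{(b)}_{k,s+1}(q) \equiv (1-q^s)\,R^{(b)}_{k,s}(q) \pmod{1-q^k}.
$$
This reduces the theorem to a comparison of coefficients, and the recurrence will fall out once the right-hand side is rewritten in reduced (degree $\le k-1$) form.

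Next I would extract the coefficient of $q^t$, for $0 \le t \le k-1$, from $(1-q^s)R^{(b)}_{k,s}(q)$ after reducing modulo $1-q^k$. Writing $R^{(b)}_{k,s}(q) = \sum_{m=0}^{k-1}\sigma^{(b)}_k(m;s)q^m$, the product splits as $\sum_m \sigma^{(b)}_k(m;s)q^m - \sum_m \sigma^{(b)}_k(m;s)q^{m+s}$. In the quotient ring we have $q^k \equiv 1$, so each monomial $q^{m+s}$ collapses to $q^{(m+s)\bmod k}$. The first sum contributes $\sigma^{(b)}_k(t;s)$ to the coefficient of $q^t$, while in the second sum the unique index with $(m+s)\equiv t \pmod k$ is $m \equiv t-s \pmod k$, contributing $-\sigma^{(b)}_k(t-s;s)$ once I invoke the period-$k$ convention $\sigma^{(b)}_k(t;s)=\sigma^{(b)}_k(t \% k;s)$ to read the (possibly negative) index $t-s$ cyclically. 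Equating this with the coefficient $\sigma^{(b)}_k(t;s+1)$ on the left-hand side gives exactly the claimed identity.

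The only genuinely delicate point is the bookkeeping in the modular reduction of the shifted term: confirming that passing to the quotient $\mathbb{Q}[q]/(1-q^k)$ sends the coefficient of $q^{m+s}$ precisely to the coefficient of $q^{(m+s)\bmod k}$, so that exactly one value of $m$ feeds into each target coefficient and the resulting index shift matches the periodic extension of $\sigma^{(b)}_k(\cdot;s)$. Because $1 \le s < k$, the exponents $m+s$ lie strictly below $2k$, so the reduction wraps each exponent around at most once and this identification is unambiguous. I expect the remainder to be routine: the factorization $(q)_s=(q)_{s-1}(1-q^s)$ and the homomorphism property of reduction do all the structural work, and no appeal to the trigonometric representations of $\sigma^{(b)}_k$ is required.
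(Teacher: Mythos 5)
Your proposal is correct and follows exactly the paper's route: the paper's one-line proof is precisely the observation $(q)_s=(q)_{s-1}-q^s(q)_{s-1}$ reduced modulo $1-q^k$, and your coefficient extraction with the cyclic index shift $t\mapsto t-s$ is just the careful elaboration of that step. The only substantive addition is your explicit check that each exponent $m+s<2k$ wraps around at most once, which the paper leaves implicit.
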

\begin{proof} Follows from observing that 
$
(q)_{s}=(q)_{s-1}-q^{s}(q)_{s-1}. 
$
\end{proof}

Using the above recurrence relation, one can perform efficient computations of the values with the initial data (Proposition \ref{init_value}):  
$$
\sigma^{(0)}_{k}(t;1)= \left\{\begin{matrix}
    1 & \textnormal{ for } t=0 \\
    0 & \textnormal{elsewhere}. 
\end{matrix}\right.
$$
For $b\geq 1$,
$$
\sigma^{(b)}_{k}(t;1)=\frac{1}{k}c_{k}(t).
$$
The time required to compute all values \(\sigma^{(b)}_{k}(t;s)\) will be linear with respect to the total number of values. Using the linear recurrence, we can easily obtain the values of \(\sigma_{k}^{(b)}(t;s)\) for specific cases. For instance, we have the following result.

\begin{prop} For $p$ prime we have 
    $$
    \sigma_{p}^{(b)}(t;p-1)= t-\frac{p+1}{2} \textnormal{ for } 1\leq t\leq p. 
    $$
\end{prop}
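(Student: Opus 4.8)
The plan is to descend from the genuine Ramanujan sum, which lives at $s=p$, down to $s=p-1$ using the linear recurrence of Theorem \ref{linear_recurrence}, and then to fix the single remaining additive constant by a global normalization. Writing $f(t):=\sigma_{p}^{(b)}(t;p-1)$, I would first apply the recurrence with $k=p$ and $s=p-1$, which gives $\sigma_{p}^{(b)}(t;p)=f(t)-f(t-(p-1))$. Since $\sigma_{p}^{(b)}(t;p)=c_{p}(t)$ and the sequence is periodic with period $p$, the shift $t-(p-1)$ may be read as $t+1 \pmod p$, so this collapses to the clean first-order relation $f(t)-f(t+1)=c_{p}(t)$, valid uniformly in $b$.

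Next I would feed in the explicit values of the Ramanujan sum at a prime: $c_{p}(t)=-1$ whenever $p\nmid t$ and $c_{p}(t)=p-1$ when $p\mid t$. For $1\leq t\leq p-1$ the first case applies, so $f(t+1)=f(t)+1$; hence $f$ is an arithmetic progression of common difference $1$ on the range $1\leq t\leq p$, i.e. $f(t)=f(1)+(t-1)$. The single exceptional step, at $t\equiv 0$, merely enforces the periodic wraparound $f(p)-f(1)=p-1$, which is automatically consistent with the progression, so it remains only to determine $f(1)$.

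To pin down the constant I would use the normalization $\sum_{t=0}^{p-1}f(t)=0$. This follows by evaluating the defining polynomial at $q=1$: since $R^{(b)}_{p,p-1}(q)\equiv \tfrac{1}{p^{b}}(q)^{b}_{p-1}(q)_{p-2}\pmod{1-q^{p}}$ and the two sides differ by a multiple of $1-q^{p}$, which vanishes at $q=1$, the sum of the coefficients of the remainder equals $\tfrac{1}{p^{b}}(1)^{b}_{p-1}(1)_{p-2}$. For $p\geq 3$ the factor $(q)_{p-2}$ carries a $(1-q)$ and thus vanishes at $q=1$, giving $\sum_{t=0}^{p-1}f(t)=0$. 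Substituting $f(t)=f(1)+(t-1)$ into this constraint (remembering $f(0)=f(p)=f(1)+(p-1)$) yields a single linear equation whose solution is $f(1)=-\tfrac{p-1}{2}$. Therefore $f(t)=t-\tfrac{p+1}{2}$, as claimed.

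The main obstacle is not the recurrence, which is mechanical, but correctly isolating and evaluating the additive constant: one must be careful with the periodic identification $t-(p-1)\equiv t+1$ and with the fact that the arithmetic progression wraps around at the multiple-of-$p$ index. The normalization $\sum_{t}f(t)=0$ does the real work, and I expect the only genuine care to lie in verifying that the defining polynomial vanishes at $q=1$, which needs $p\geq 3$ so that $(q)_{p-2}$ retains a $(1-q)$ factor; the degenerate case $p=2$ would have to be checked separately.
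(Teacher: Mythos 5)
Your argument is correct and follows essentially the same route as the paper: descend from $\sigma_{p}^{(b)}(t;p)=c_{p}(t)$ to $s=p-1$ via the linear recurrence, read $t-(p-1)$ as $t+1$ modulo $p$, and conclude that $f(t)=\sigma_{p}^{(b)}(t;p-1)$ increases by $1$ at each step off the multiples of $p$. The paper's one-line proof leaves the additive constant undetermined, and your normalization $\sum_{t=0}^{p-1}f(t)=R^{(b)}_{p,p-1}(1)=\tfrac{1}{p^{b}}(1)^{b}_{p-1}(1)_{p-2}=0$ (valid for $p\ge 3$) supplies exactly the missing step; alternatively the paper's later backward recurrence would fix the constant directly. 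Your caveat about $p=2$ is apt: for $p=2$ and $b=0$ one computes $\sigma_{2}^{(0)}(0;1)=1$ and $\sigma_{2}^{(0)}(1;1)=0$, whereas the stated formula gives $\tfrac12$ and $-\tfrac12$, so the claim as written genuinely requires $p$ odd (or $b\ge 1$).
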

\begin{proof} By linear recurrence and 
$$
    \sigma_{p}^{(b)}(p;p)=c_{p}(p)=p-1 \textnormal{ and } \sigma_{p}^{(b)}(t;p)=c_{p}(t)=-1 \textnormal{ for } 1\leq t\leq p-1.
    $$
\end{proof}

Ramanujan sum has the following beautiful arithmetical structure:
$$
c_{k}(t)=\sum_{d|(k,t)}\mu\left(\frac{k}{d}\right)d.
$$
Using the above linear recurrence, one can understand the arithmetical structure of $\sigma^{(1)}_{k}(t;s)$. Indeed, we have 
$$
\sigma^{(1)}_{k}(t;2)=\frac{1}{k}\left(c_{k}(t)-c_{k}(t-1)\right) 
$$
and 
$$
\sigma^{(1)}_{k}(t;3)=\frac{1}{k}\left(c_{k}(t)-c_{k}(t-1)-c_{k}(t-2)+c_{k}(t-3)\right).
$$
For \( k > 2 \), we can consolidate the above equations for \( s \in \{1,2,3\} \) and \( s \leq t \leq k + s - 1 \) as follows:
\[
\sigma_{k}^{(1)}(t; s) = \frac{1}{k} \sum_{\delta = 0}^{s-1} \sum_{d \mid (k, t - \delta)} (-1)^{\textnormal{wt}(\delta)} \mu \left( \frac{k}{d} \right) d,
\]
where \( \textnormal{wt}(\delta) \) denotes the number of 1s in the binary representation of \( \delta \). This expression shows that the value of \( \sigma_{k}^{(1)}(t; s) \) depends on the greatest common divisor of $k$ and the translates of \( t \).

\begin{table}
    \centering
    \begin{tabular}{|c|c|c|c|c|c|c|c|} 
        \hline
        \diagbox{$s$}{$t$} & $0$ & $1$ & $2$ & $3$ & $4$ & $5$   \\
        \hline
        $1$ & $1$ & $0$ & $0$ & $0$ & $0$ & $0$   \\
        \hline
        $2$ & $1$ & $-1$ & $0$ & $0$ & $0$ & $0$   \\
        \hline
        $3$ & $1$ & $-1$ & $-1$ & $1$ & $0$ & $0$   \\
        \hline
        $4$ & $0$ & $-1$ & $-1$ & $0$ & $1$ & $1$   \\
        \hline
        $5$ & $1$ & $-1$ & $-2$ & $-1$ & $1$ & $2$   \\
        \hline
        $6$ & $2$ & $1$ & $-1$ & $-2$ & $-1$ & $1$   \\
        \hline
    \end{tabular}
    \hspace{1cm}  
    \begin{tabular}{|c|c|c|c|c|c|c|c|}
        \hline
       \diagbox{$s$}{$t$}  & $0$ & $1$ & $2$ & $3$ & $4$ & $5$   \\
        \hline
          $1$ & $\frac{1}{3}$ & $\frac{1}{6}$ & $-\frac{1}{6}$ & $-\frac{1}{3}$ & $-\frac{1}{6}$ & $\frac{1}{6}$   \\
        \hline
        $2$ & $\frac{1}{6}$ & $-\frac{1}{6}$ & $-\frac{1}{3}$ & $-\frac{1}{6}$ & $\frac{1}{6}$ & $\frac{1}{3}$   \\
        \hline
        $3$ & $0$ & $-\frac{1}{2}$ & $-\frac{1}{2}$ & $0$ & $\frac{1}{2}$ & $\frac{1}{2}$   \\
        \hline
        $4$ & $0$ & $-1$ & $-1$ & $0$ & $1$ & $1$   \\
        \hline
        $5$ & $1$ & $-1$ & $-2$ & $-1$ & $1$ & $2$   \\
        \hline
        $6$ & $2$ & $1$ & $-1$ & $-2$ & $-1$ & $1$   \\
        \hline
    \end{tabular}
 
        \caption{The table on the left displays \( \sigma_{6}^{(0)}(t; s) \), while the table on the right displays \( \sigma_{6}^{(b)}(t; s) \) for \( b \geq 1 \).}

    \label{table_6}
\end{table}

Table \ref{table_6} provides the values for the case \( k = 6 \). From the table, it can be observed that the values \( \sigma^{(b)}_{k}(t; s) \) are either integers or rational numbers of the form \( \frac{a}{k} \), where \( a \) is an integer and \( k \) is the given base. Moreover, after a certain \( s \), the rows in both tables are equal. The last row corresponds to the Ramanujan sum. These observations can be made more precise for the general case using the following lemma.

\begin{lem} \label{lem_freq_1}
For $b\geq 1$, we have 
\begin{equation}\label{sigma_freq_1}
\sigma^{(b)}_{k}(t;s) = \frac{1}{k}\sum_{\xi\in \Delta_{k}} (\xi)_{s-1}\xi^{-t},     
\end{equation}
where $\Delta_{k}=\{\alpha^{h}:\alpha = e^{2\pi i/k}, (h,k)=1 \text{ and } 1\leq h\leq k\}$ is the collection all primitive $k^{th}$ roots of unity.     
\end{lem}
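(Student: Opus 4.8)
The plan is to extract the coefficients $\sigma^{(b)}_{k}(t;s)$ by evaluating the relevant polynomial at the $k$-th roots of unity, i.e.\ to run a discrete Fourier (finite trigonometric) inversion. Write $F(q) = \frac{1}{k^{b}}(q)^{b}_{k-1}(q)_{s-1}$, so that by Definition \ref{defn_GRS} the polynomial $R^{(b)}_{k,s}(q) = \sum_{t=0}^{k-1}\sigma^{(b)}_{k}(t;s)q^{t}$ is the remainder of $F(q)$ upon division by $1-q^{k}$. Writing $F(q) = (1-q^{k})Q(q) + R^{(b)}_{k,s}(q)$ and substituting $q = \alpha^{j}$ with $\alpha = e^{2\pi i/k}$, the factor $1-(\alpha^{j})^{k} = 1-\alpha^{jk} = 0$ vanishes for every $j$, so that $R^{(b)}_{k,s}(\alpha^{j}) = F(\alpha^{j})$ for all $0 \leq j \leq k-1$.

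First I would invoke the orthogonality of the characters $j \mapsto \alpha^{jt}$: since $\sum_{j=0}^{k-1}\alpha^{j(t'-t)}$ equals $k$ when $t \equiv t' \pmod{k}$ and $0$ otherwise, multiplying the identity $R^{(b)}_{k,s}(\alpha^{j}) = \sum_{t'=0}^{k-1}\sigma^{(b)}_{k}(t';s)\alpha^{jt'}$ by $\alpha^{-jt}$ and averaging over $j$ isolates the single coefficient, yielding
$$
\sigma^{(b)}_{k}(t;s) = \frac{1}{k}\sum_{j=0}^{k-1}R^{(b)}_{k,s}(\alpha^{j})\alpha^{-jt} = \frac{1}{k}\sum_{j=0}^{k-1}F(\alpha^{j})\alpha^{-jt} = \frac{1}{k}\sum_{j=0}^{k-1}\frac{1}{k^{b}}\big((\alpha^{j})_{k-1}\big)^{b}(\alpha^{j})_{s-1}\alpha^{-jt}.
$$

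The key step is then to exploit the evaluation of the restricted $q$-product at roots of unity already recorded in the Introduction: $(\alpha^{j})_{k-1} = k$ when $\gcd(j,k)=1$ and $(\alpha^{j})_{k-1} = 0$ otherwise. (For a primitive root the factors $1-\alpha^{jm}$ with $1 \leq m \leq k-1$ run over $1-\zeta$ for all nontrivial $k$-th roots $\zeta$, whose product is $k$; for a non-primitive root one such factor vanishes.) This is precisely where the hypothesis $b \geq 1$ becomes essential: for $b \geq 1$ the weight $\frac{1}{k^{b}}\big((\alpha^{j})_{k-1}\big)^{b}$ equals $\frac{1}{k^{b}}\cdot k^{b} = 1$ when $\gcd(j,k)=1$ and equals $0^{b}/k^{b} = 0$ otherwise, so the summand survives exactly on the coprime residues. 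The sum therefore collapses to
$$
\sigma^{(b)}_{k}(t;s) = \frac{1}{k}\sum_{\substack{j=0 \\ (j,k)=1}}^{k-1}(\alpha^{j})_{s-1}\alpha^{-jt} = \frac{1}{k}\sum_{\xi \in \Delta_{k}}(\xi)_{s-1}\xi^{-t},
$$
which is exactly \eqref{sigma_freq_1} once we identify $\xi = \alpha^{j}$ ranging over the primitive $k$-th roots of unity $\Delta_{k}$.

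I do not anticipate a genuine obstacle; the argument is a clean root-of-unity filter. The only points demanding care are (i) justifying that the remainder agrees with $F$ at every $k$-th root of unity, which holds because $1-q^{k}$ annihilates them, and (ii) making explicit that the factor $\big((\alpha^{j})_{k-1}\big)^{b}/k^{b}$ behaves as an indicator of $\gcd(j,k)=1$ precisely because $b \geq 1$. The latter observation also explains why the coprime condition is absent from $\sigma^{(0)}_{k}$ and present in $\sigma^{(1)}_{k}$, matching the two-sum dichotomy highlighted in the Introduction.
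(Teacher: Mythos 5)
Your proposal is correct and follows essentially the same route as the paper: both extract the coefficients by a finite Fourier (root-of-unity) inversion, use the fact that the remainder agrees with $\frac{1}{k^{b}}(q)_{k-1}^{b}(q)_{s-1}$ at every $k$-th root of unity, and then invoke $(\alpha^{j})_{k-1}=k$ for $(j,k)=1$ and $0$ otherwise so that, for $b\geq 1$, the sum collapses onto the primitive roots. Your write-up is if anything slightly more explicit about why the hypothesis $b\geq 1$ is needed.
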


\begin{proof}
 Since $\left\{\sigma_{k}^{(b)}(t;s)\right\}_{t=0}^{\infty}$ is a periodic sequence, the generating function
$$
F(q)=\sum_{t\geq 0} \sigma_{k}^{(b)}(t;s) q^{t} = R_{k,s}^{(b)}(q)\left(\sum_{t=0}^{\infty}q^{kt}\right),
$$
has a finite Fourier series expansion given by 
\begin{equation}\label{fourier_series_F}
F(q) = \sum_{t\geq 0} \sigma_{k}^{(b)}(t;s) q^{t} = \sum_{t\geq 0} \left(\sum_{j=0}^{k-1} a(j)\xi^{jt}\right) q^{t},    
\end{equation}
where $a(j)=\frac{1}{k}R_{k,s}^{(b)}(\xi^{-j})$ and $\xi=e^{2\pi i/k}$. Comparing the coefficients of any fixed $q^{t}$ gives us the finite Fourier series for $\sigma_{k}^{(b)}(t;s)$, namely 
$$
\sigma_{k}^{(b)}(t;s) = \frac{1}{k}\sum_{j=0}^{k-1}R_{k,s}^{(b)}(\xi^{-j})\xi^{jt}.
$$
As $\xi^{-j}$, for $0\leq j<k$, is a $k^{th}$ root of unity, using  
$$
(\xi^{-j})_{k-1} = (1-\xi^{-j})(1-\xi^{-2j})\cdots (1-\xi^{-j(k-1)})= \left\{\begin{matrix} 
k & \textnormal{ for } (j,k)=1\\
0 & \textnormal { otherwise },
\end{matrix}\right. 
$$
we obtain
$$
a(j)=\frac{1}{k}R_{k,s}^{(b)}(\xi^{j})=\left\{\begin{matrix} 
\frac{1}{k}(\xi^{-j})_{s-1} & \textnormal{ for } (j,k)=1\\
0 & \textnormal { otherwise }. 
\end{matrix}\right.
$$
Since $\Delta_{k}$ is defined only for $(j,k)=1$ and by (\ref{fourier_series_F}) we have the result. 
\end{proof}

\begin{rema} 
Our generalization clearly extends the Ramanujan sum in a direct and insightful manner. For \( s = k \) and \( b \geq 0 \), we have
\[
c_{k}(t) = \sigma_{k}^{(b)}(t; k) = \frac{1}{k} \sum_{\xi \in \Delta_{k}} (\xi)_{k-1} \xi^{-t} = \sum_{\xi \in \Delta_{k}} \xi^{-t},
\]
which precisely matches the definition originally provided by Ramanujan \cite{Rama}.
\end{rema}

We can compute the initial values with the above formula. 

\begin{cor} \label{init_value}
For $k,b\geq 1$ and $0\leq t<k$ we have 
$$
    \sigma_{k}^{(b)}(t;1)=\frac{1}{k}\sigma_{k}^{(b)}(t;k)=\frac{1}{k}c_{k}(t).
$$
\end{cor}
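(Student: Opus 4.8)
The plan is to specialize the finite trigonometric representation of Lemma \ref{lem_freq_1} at $s = 1$. First I would set $s = 1$ in (\ref{sigma_freq_1}); the factor $(\xi)_{s-1}$ then becomes $(\xi)_{0} = 1$ by the convention $(q)_{0} = 1$ fixed at the start of this section. This collapses the weighted sum to
$$
\sigma_{k}^{(b)}(t;1) = \frac{1}{k}\sum_{\xi \in \Delta_{k}} \xi^{-t}.
$$
Next I would recognize the inner sum as the Ramanujan sum itself: as recorded in the Remark immediately following Lemma \ref{lem_freq_1}, one has $c_{k}(t) = \sum_{\xi \in \Delta_{k}} \xi^{-t}$, which is precisely Ramanujan's original definition. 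Substituting yields $\sigma_{k}^{(b)}(t;1) = \tfrac{1}{k} c_{k}(t)$, establishing the right-hand equality.

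For the middle equality I would invoke the identity $\sigma_{k}^{(b)}(t;k) = c_{k}(t)$, already established in the Introduction and re-derived in the Remark by taking $s = k$ in (\ref{sigma_freq_1}) together with $(\xi)_{k-1} = k$ for primitive $\xi$. Chaining the two gives
$$
\sigma_{k}^{(b)}(t;1) = \frac{1}{k} c_{k}(t) = \frac{1}{k}\,\sigma_{k}^{(b)}(t;k),
$$
which is the claimed corollary. The hypothesis $b \geq 1$ is exactly what is needed for Lemma \ref{lem_freq_1} to apply, and the range $0 \leq t < k$ matches the periodicity window so no reduction modulo $k$ is required.

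I expect no genuine obstacle here: the result is a one-line specialization of Lemma \ref{lem_freq_1}. The only point deserving care is the bookkeeping of conventions, namely confirming that $(\xi)_{s-1}$ degenerates to the empty product $(\xi)_{0} = 1$ at $s = 1$ rather than to some nontrivial factor. As an alternative that sidesteps the spectral formula entirely, one could argue directly from Definition \ref{defn_GRS}: at $s = 1$ the defining polynomial is $R_{k,1}^{(b)}(q) = \tfrac{1}{k^{b}}(q)_{k-1}^{b} \pmod{1 - q^{k}}$, and reducing the power $(q)_{k-1}^{b}$ modulo $1 - q^{k}$ recovers $\tfrac{1}{k} R_{k}(q)$; but the route through Lemma \ref{lem_freq_1} is cleaner and I would present that one.
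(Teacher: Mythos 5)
Your proposal is correct and follows essentially the same route as the paper: both specialize Lemma \ref{lem_freq_1} at $s=1$ (where $(\xi)_{0}=1$) and at $s=k$ (where $(\xi)_{k-1}=k$ for primitive $\xi$) and identify $\sum_{\xi\in\Delta_{k}}\xi^{-t}$ with $c_{k}(t)$. The only difference is presentational — the paper chains the equalities starting from $\sigma_{k}^{(b)}(t;k)$ while you start from $\sigma_{k}^{(b)}(t;1)$.
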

\begin{proof} Using $(\xi)_{k-1}=k$ for $\xi\in \Delta_{k}$,
$$
\sigma_{k}^{(b)}(t;k)=\frac{1}{k}\sum_{\xi\in \Delta_{k}}(\xi)_{k-1}\xi^{-t}=c_{k}(t)=k\left(\frac{1}{k}\sum_{\xi\in \Delta_{k}}\xi^{-t}\right)=k\sigma^{(b)}_{k}(t;1).
$$    
\end{proof}

Notice that the right hand side of (\ref{sigma_freq_1}) is independent of $b$. Therefore, it suffices to consider the values of $\sigma^{(b)}_{k}(t;s)$ only for the two cases $b=0,1$.
\begin{cor}
For $b\geq 1$, $\sigma_{k}^{(b+1)}(t;s)=\sigma_{k}^{(b)}(t;s)$.    
\end{cor}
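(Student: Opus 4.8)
The plan is to read the result directly off Lemma \ref{lem_freq_1}. That lemma furnishes the closed form
\[
\sigma^{(b)}_{k}(t;s) = \frac{1}{k}\sum_{\xi\in \Delta_{k}} (\xi)_{s-1}\xi^{-t},
\]
valid for every $b \geq 1$. Since the hypothesis $b \geq 1$ guarantees $b+1 \geq 1$ as well, the lemma applies equally to the exponent $b+1$. First I would apply the lemma to $\sigma^{(b)}_{k}(t;s)$ and then to $\sigma^{(b+1)}_{k}(t;s)$; because the right-hand side involves only the primitive-root set $\Delta_{k}$, the restricted products $(\xi)_{s-1}$, and the powers $\xi^{-t}$---and none of these quantities carries any dependence on $b$---the two expressions coincide verbatim, yielding the claimed equality. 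There is essentially no obstacle here: the entire content is the observation that the Fourier representation of Lemma \ref{lem_freq_1} has already absorbed all the $b$-dependence, which is exactly the remark preceding the corollary.

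If one prefers a proof that does not route through the spectral lemma, an alternative plan is to work directly in the quotient ring $\mathbb{C}[q]/(1-q^{k})$. The key structural fact is that $\frac{1}{k}(q)_{k-1}$ is idempotent modulo $1-q^{k}$: under the Chinese Remainder decomposition indexed by the $k$-th roots of unity, multiplication by $(q)_{k-1}$ acts as the scalar $(\xi^{j})_{k-1}$ on each component, which equals $k$ precisely when $(j,k)=1$ and $0$ otherwise, so $\frac{1}{k}(q)_{k-1}$ is the projection onto the primitive-root components. Writing $R^{(b)}_{k,s}(q) \equiv \bigl(\tfrac{1}{k}(q)_{k-1}\bigr)^{b}(q)_{s-1} \pmod{1-q^{k}}$ from Definition \ref{defn_GRS} and using idempotency to collapse $\bigl(\tfrac{1}{k}(q)_{k-1}\bigr)^{b}$ to $\tfrac{1}{k}(q)_{k-1}$ for all $b \geq 1$ shows that $R^{(b)}_{k,s}$ is independent of $b$, and hence so are its coefficients $\sigma^{(b)}_{k}(t;s)$. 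Comparing coefficients of $q^{t}$ then delivers the corollary. The only point requiring care in this second route is justifying the idempotency via the eigenvalue computation, but this is precisely the root-of-unity evaluation already carried out in the proof of Lemma \ref{lem_freq_1}.
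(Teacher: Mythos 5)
Your first argument is exactly the paper's justification: the corollary is stated immediately after the observation that the right-hand side of Lemma \ref{lem_freq_1} carries no dependence on $b$, so applying the lemma at $b$ and at $b+1$ gives identical expressions. The alternative idempotency argument in the quotient ring is also sound, but the primary route you propose coincides with the paper's.
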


Following similar steps of the proof of Lemma \ref{lem_freq_1} and by the finite Fourier series, \(\sigma^{(0)}_{k}(t;s)\) is given by
\begin{equation}
\sigma^{(0)}_{k}(t;s) = \frac{1}{k} \sum_{j=0}^{k-1} (\alpha^j)_{s-1} \alpha^{-jt},
\end{equation}
where \(\alpha = e^{2\pi i / k}\). Notice that there is no coprime condition. However, it is easy to show that the sum $\sigma_{k}^{(0)}(t;s)$ could have a coprime condition for higher values of $s$. 

\begin{lem} 
Let $s \geq k/p$, where $p$ is the smallest prime divisor of $k$. Then,  
\begin{equation}\label{sigma_freq_0}
\sigma^{(0)}_{k}(t; s) = \frac{1}{k} \sum_{\substack{j=0 \\ (j, k) = 1}}^{k-1} \alpha^j \alpha^{-jt} = \frac{1}{k} \sum_{\xi \in \Delta_{k}} (\xi)_{s-1} \xi^{-t}, 
\end{equation}
where $\alpha = e^{2\pi i / k}$ and $\Delta_{k}$ is the collection of primitive $k$-th roots of unity. Consequently,
\begin{equation}
\sigma^{(1)}_{k}(t; s) = \sigma^{(0)}_{k}(t; s) \quad \text{for} \quad s \geq k/p,
\end{equation}
and, thus, $\sigma^{(1)}_{k}(t; s)$ is an integer.
\end{lem}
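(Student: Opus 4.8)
The plan is to begin from the finite Fourier expression $\sigma^{(0)}_{k}(t;s) = \frac{1}{k}\sum_{j=0}^{k-1}(\alpha^{j})_{s-1}\alpha^{-jt}$ and to show that, in the stated range of $s$, only the indices $j$ with $\gcd(j,k)=1$ survive. Concretely, I would prove that $(\alpha^{j})_{s-1}=0$ for every $j$ with $\gcd(j,k)>1$ (this includes $j=0$, for which $\gcd(0,k)=k$). Granting this, the sum over $0\le j\le k-1$ collapses onto the primitive indices, i.e. onto $\Delta_{k}$, producing exactly the right-hand side of Lemma \ref{lem_freq_1}. Since that right-hand side is $\sigma^{(b)}_{k}(t;s)$ for every $b\ge 1$, both displayed equalities and the identity $\sigma^{(1)}_{k}(t;s)=\sigma^{(0)}_{k}(t;s)$ follow at once.

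The heart of the matter is the vanishing claim, which I would settle via the multiplicative order of $\alpha^{j}$. Writing $d=\gcd(j,k)$, the root $\alpha^{j}=e^{2\pi i j/k}$ has order $k/d$, so $(\alpha^{j})^{m}=1$ exactly when $(k/d)\mid m$. In the product $(\alpha^{j})_{s-1}=\prod_{m=1}^{s-1}\bigl(1-(\alpha^{j})^{m}\bigr)$ the factor indexed by $m=k/d$ is then $1-1=0$ as soon as $k/d\le s-1$. Every $d=\gcd(j,k)>1$ is a divisor of $k$ exceeding $1$, hence $d\ge p$ and $k/d\le k/p$; therefore the single inequality $s-1\ge k/p$ forces the vanishing at all non-primitive indices simultaneously. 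I expect this order computation, together with the observation that the binding constraint comes from the smallest admissible divisor $d=p$, to be the only genuine step; everything else is bookkeeping.

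For integrality I would invoke nothing new: by Definition \ref{defn_GRS}, $\sigma^{(0)}_{k}(t;s)$ is a coefficient of $R^{(0)}_{k,s}(q)=(q)_{s-1}\bmod (1-q^{k})$, and reducing the integer polynomial $(q)_{s-1}$ modulo $1-q^{k}$ merely substitutes $q^{k}\mapsto 1$, which preserves integer coefficients; hence $\sigma^{(0)}_{k}(t;s)\in\mathbb{Z}$ for every $s$, and the identity just obtained transports this to $\sigma^{(1)}_{k}(t;s)$. As a sanity check I would compare the threshold against Table \ref{table_6}: for $k=6$ one has $p=2$ and $k/p=3$, yet the $\sigma^{(0)}$ and $\sigma^{(1)}$ rows first agree at $s=4$, matching precisely the inequality the order argument yields, namely $s-1\ge k/p$ (equivalently $s\ge k/p+1$). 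This is very slightly stronger than the hypothesis as literally written, so I would state the bound as $s-1\ge k/p$ to keep the boundary case $s=k/p$ honest; with the genuine convention $p=k$ for prime $k$ (rather than $p=1$), the same inequality reduces correctly to $s\ge 2$, the range in which the only non-primitive index $j=0$ already has $(\alpha^{0})_{s-1}=0$.
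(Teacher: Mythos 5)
Your proposal is correct and reaches the same key claim as the paper's proof, namely that $(\alpha^{j})_{s-1}$ vanishes at every non-primitive index $j$ so that the full sum $\frac{1}{k}\sum_{j=0}^{k-1}(\alpha^{j})_{s-1}\alpha^{-jt}$ collapses onto $\Delta_{k}$. The justification of the vanishing differs, though: the paper argues that for $s\geq k/p$ the set $\{k-s,\dots,k-1\}$ contains no divisor of $k$, so the trailing factors of $(q)_{k-1}$ do not vanish at $\alpha^{j}$ and hence all the zeros of $(\alpha^{j})_{k-1}$ must already sit inside $(\alpha^{j})_{s-1}$; your argument instead computes the order $k/\gcd(j,k)$ of $\alpha^{j}$ directly and locates the vanishing factor $1-(\alpha^{j})^{k/d}$ explicitly. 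Your route is cleaner and, more importantly, it exposes an off-by-one issue in the hypothesis: the order argument needs $k/d\leq s-1$, and since the binding case is $d=p$ this is $s-1\geq k/p$, not $s\geq k/p$. The paper's own Table \ref{table_6} corroborates you: for $k=6$ one has $k/p=3$, yet the $\sigma^{(0)}$ and $\sigma^{(1)}$ rows disagree at $s=3$ (e.g.\ $\sigma^{(0)}_{6}(0;3)=1$ while $\sigma^{(1)}_{6}(0;3)=0$) and first coincide at $s=4$; indeed $(\alpha^{2})_{2}=(1-\zeta_{3})(1-\zeta_{3}^{2})=3\neq 0$, so the boundary case genuinely fails. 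Your remarks on the convention for prime $k$ (where the only non-primitive index is $j=0$, handled by $(1)_{s-1}=0$ for $s\geq 2$) and your integrality argument via reduction of the integer polynomial $(q)_{s-1}$ modulo $1-q^{k}$ are both sound; the latter is simpler than anything the paper offers, since the paper's proof ends with ``the result follows'' without addressing integrality. In short: same strategy, a more careful execution, and a legitimate correction to the stated threshold.
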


\begin{proof}
From the case $s\geq k/p$ we can see that the set $\{k-s,\cdots,k-1\}$ does not contain any divisor of $k$. Thus, the factors $(1-x^{k-s-1}),\cdots, (1-x^{k-1})$ do not vanish for any $x=\alpha^{j}, j=1,2,\cdots,(k-1)$. Therefore, we have $R_{k,s}^{(0)}(\alpha^{j})=(\alpha^{-j})_{s-1}$ for $(j,k)=1$ and $R_{k,s}^{(0)}(\alpha^{j})=0$ for $(j,k)\neq 1$. The result follows. 
\end{proof} 

\begin{cor}
For $b\geq 0$ and $s\geq k/p$ where $p$ is the smallest prime dividing $k$ we have 
$$
R_{k,s}^{(b)}(q)=(q)_{s-1} \mod 1-q^{k}.
$$
Consequently, for this case the value $\sigma_{k}^{(b)}(t;s)$ is an integer. 
\end{cor}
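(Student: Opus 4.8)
The plan is to obtain this statement as a formal consequence of the preceding Lemma (the case $s \geq k/p$) together with the earlier corollary asserting $b$-independence, the case $b=0$ being immediate. Indeed, unwinding Definition \ref{defn_GRS} at $b=0$ gives $R^{(0)}_{k,s}(q) = \frac{1}{k^0}(q)^0_{k-1}(q)_{s-1} = (q)_{s-1} \pmod{1-q^k}$, so there is nothing to prove when $b=0$. The entire content therefore lies in showing that attaching the factor $\frac{1}{k^b}(q)^b_{k-1}$ does not alter the residue modulo $1-q^k$ once $s \geq k/p$.

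For $b\geq 1$ I would chain equalities at the level of coefficients. By the corollary stating $\sigma^{(b+1)}_{k}(t;s)=\sigma^{(b)}_{k}(t;s)$ for $b\geq 1$, every $b\geq 1$ yields the same sequence, so $\sigma^{(b)}_{k}(t;s)=\sigma^{(1)}_{k}(t;s)$; by the preceding Lemma, the hypothesis $s\geq k/p$ forces $\sigma^{(1)}_{k}(t;s)=\sigma^{(0)}_{k}(t;s)$. Hence $\sigma^{(b)}_{k}(t;s)=\sigma^{(0)}_{k}(t;s)$ for all $b\geq 0$ and all $t$, which is precisely the coefficientwise identity $R^{(b)}_{k,s}(q)=R^{(0)}_{k,s}(q)=(q)_{s-1}\pmod{1-q^k}$. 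For integrality I would note that $(q)_{s-1}=(1-q)\cdots(1-q^{s-1})\in\mathbb{Z}[q]$, and that reduction modulo $1-q^k$ amounts to the substitution $q^k\mapsto 1$, i.e.\ collapsing exponents modulo $k$, which preserves integer coefficients; thus each $\sigma^{(b)}_{k}(t;s)$ is an integer.

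If instead one wanted a self-contained argument bypassing the Fourier computation of the preceding Lemma, I would evaluate both $\frac{1}{k^b}(q)^b_{k-1}(q)_{s-1}$ and $(q)_{s-1}$ at the $k$-th roots of unity $\alpha^j$, $0\le j<k$, using that $1-q^k$ is separable so that congruence modulo $1-q^k$ is equivalent to agreement at every such $\alpha^j$. At a primitive root ($(j,k)=1$) the identity $(\alpha^j)_{k-1}=k$ cancels the factor $k^{-b}$ and both sides reduce to $(\alpha^j)_{s-1}$; at a non-primitive root ($(j,k)=d>1$, including $j=0$) one has $(\alpha^j)_{k-1}=0$, so the left side vanishes and the desired equality collapses to the single claim $(\alpha^j)_{s-1}=0$. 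This last vanishing --- equivalently, that the factor $1-\alpha^{j(k/d)}$ already occurs among $1-\alpha^{j},\dots,1-\alpha^{j(s-1)}$ --- is exactly the point secured by the size hypothesis on $s$ in the preceding Lemma, and it is the one genuinely nontrivial step, every other manipulation being an automatic cancellation. I expect this vanishing to be the main obstacle, since it is where the arithmetic of $k$ (through its smallest prime divisor $p$) actually enters; whichever route one takes, it is the same single substantive fact that does all the work.
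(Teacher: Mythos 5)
Your deduction is correct and is essentially the route the paper intends: the corollary is stated there without proof, as an immediate consequence of the preceding Lemma together with the $b$-independence of $\sigma^{(b)}_{k}(t;s)$ for $b\geq 1$, and that is exactly the chain $\sigma^{(b)}=\sigma^{(1)}=\sigma^{(0)}$ you write down. Your alternative self-contained argument via evaluation at the $k$-th roots of unity is likewise just the proof of that Lemma replayed, so the two routes you offer are really the paper's implicit proof made explicit.

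One caveat is worth recording, precisely because you isolate the vanishing $(\alpha^{j})_{s-1}=0$ for $(j,k)=d>1$ as \emph{the} substantive step and assert it is secured by the size hypothesis. That vanishing requires the factor $1-q^{k/d}$ to occur among $1-q,\dots,1-q^{s-1}$, i.e.\ $k/d\leq s-1$; in the worst case $d=p$ this needs $s\geq k/p+1$, which is strictly stronger than the stated hypothesis $s\geq k/p$. The boundary case genuinely fails: for $k=6$, $p=2$, $s=3=k/p$, the paper's own Table 1 gives $\sigma^{(0)}_{6}(1;3)=-1$ while $\sigma^{(1)}_{6}(1;3)=-\tfrac{1}{2}$, so neither the identity $R^{(b)}_{k,s}(q)=(q)_{s-1}\bmod 1-q^{k}$ (for $b\geq 1$) nor the integrality conclusion holds at $s=k/p$. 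This off-by-one slip is inherited from the Lemma rather than introduced by you, but since your argument leans on it explicitly, the hypothesis should be tightened to $s>k/p$ (equivalently $s-1\geq k/p$) in both the Lemma and this corollary; everything else in your proposal, including the integrality argument via reduction of exponents modulo $k$, is fine.
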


\begin{thm}[Backward Linear Recurrence]\label{backward_linear_rec}

For \(s \geq 1\),
$$
\sigma_{k}^{(b)}(t;s) = -\frac{1}{k} \sum_{j=1}^{k-1} j \sigma^{(b)}_{k}(t - js; s + 1)
$$
\end{thm}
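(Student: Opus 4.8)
The plan is to prove the identity by passing to the finite Fourier (root-of-unity) representation of $\sigma^{(b)}_k$, which diagonalizes the cyclic convolution hidden on the right-hand side. By Lemma \ref{lem_freq_1}, for $b\geq 1$ I may write $\sigma^{(b)}_k(t';s+1)=\frac1k\sum_{\xi\in\Delta_k}(\xi)_s\,\xi^{-t'}$, where $\Delta_k$ is the set of primitive $k$-th roots of unity; the case $b=0$ is identical except that the sum runs over all $k$-th roots $\xi=\alpha^j$ with $\alpha=e^{2\pi i/k}$. Substituting $t'=t-js$ and interchanging the two summations, the outer index $j$ decouples from $\xi$ and collects into the inner sum $\sum_{j=1}^{k-1} j\,\xi^{js}$.

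The key computational step is to evaluate this inner sum. Since $\xi$ is a $k$-th root of unity, $\omega:=\xi^{s}$ satisfies $\omega^k=1$, and I would use the differentiated finite geometric series $\sum_{j=0}^{k-1} j\,\omega^{j}=\dfrac{k}{\omega-1}$, valid whenever $\omega\neq 1$ (the $j=0$ term contributes nothing, so extending the range to $0,\dots,k-1$ is free). This single evaluation is the only nontrivial summation in the argument and is the Gauss-sum-type input flagged in the introduction.

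With the inner sum replaced by $k/(\xi^s-1)$, each surviving term becomes $-\frac1k\,\dfrac{(\xi)_s}{\xi^{s}-1}\,\xi^{-t}$, and the product structure of the restricted $q$-product finishes the computation: factoring $(\xi)_s=(1-\xi^{s})(\xi)_{s-1}$ gives the cancellation $\dfrac{(\xi)_s}{\xi^{s}-1}=-(\xi)_{s-1}$, so the term collapses to $\frac1k(\xi)_{s-1}\xi^{-t}$. Summing over $\xi$ then reproduces exactly the Fourier expansion of $\sigma^{(b)}_k(t;s)$ from Lemma \ref{lem_freq_1}, which is the claim.

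The step I expect to be the main obstacle is the degenerate case $\omega=\xi^{s}=1$, where the inner-sum formula $k/(\omega-1)$ is invalid and the constant value $k(k-1)/2$ appears instead. The resolution is that precisely when $\xi^s=1$ the factor $(1-\xi^{s})$ sits inside $(\xi)_s$, so that term is annihilated before the inner sum is ever needed; one must only check that no such $\xi$ survives in $\sigma^{(b)}_k(t;s)$ itself, i.e. that the index set contains no root of exact order $s$ (a root of order strictly below $s$ already has $(\xi)_{s-1}=0$). For $b\geq 1$ this is automatic for $1\le s\le k-1$, since every $\xi\in\Delta_k$ has order $k\neq s$; this is exactly why the coprimality condition makes the backward recurrence close. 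As a consistency check I would confirm agreement with the forward recurrence of Theorem \ref{linear_recurrence} and against the $k=6$ table. An equivalent algebraic route multiplies $R^{(b)}_{k,s+1}(q)\equiv(1-q^s)R^{(b)}_{k,s}(q)\pmod{1-q^k}$ by $-\frac1k\sum_{j=1}^{k-1}jq^{js}$ and verifies that this operator inverts multiplication by $(1-q^s)$ on the relevant image; it meets the same degeneracy, so I would keep the Fourier formulation as the cleaner write-up.
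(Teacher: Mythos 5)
Your argument is the paper's own proof run in the opposite direction: both pass to the root-of-unity representation of Lemma \ref{lem_freq_1}, both rest on the single evaluation $\sum_{j=1}^{k-1} j\,\omega^{j} = k/(\omega-1)$ (which the paper writes as $(1-\xi)(\xi+2\xi^{2}+\cdots+(k-1)\xi^{k-1})=-k$), and both close via the factorization $(\xi)_{s}=(1-\xi^{s})(\xi)_{s-1}$, so the proposal is correct and essentially identical in method. Your explicit handling of the degenerate case $\xi^{s}=1$ is actually more careful than the paper, whose closing remark that the $b=0$ case follows ``similarly'' is not quite right: when $b=0$ and $s\mid k$ with $s<k$, a $k$-th root of unity of exact order $s$ contributes a nonzero term to $\sigma^{(0)}_{k}(t;s)$ while being annihilated on the right-hand side (e.g.\ $k=6$, $s=3$, $t=0$ gives $1$ on the left but $0$ on the right by the paper's own table), so the coprimality restriction you isolate is genuinely needed for the recurrence to hold as stated.
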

\begin{proof}
Let \(\xi\) be a \(k^{\text{th}}\) root of unity. Then, we have
$$
(1 - \xi)(\xi + 2\xi^{2} + \cdots + (k - 1)\xi^{k-1}) = -k.
$$
Consider
\begin{eqnarray}
\nonumber \sigma^{(1)}_{k}(t;s) &=& \frac{1}{k} \sum_{\xi \in \Delta_{k}} (\xi)_{s-1} \xi^{-t} \frac{1 - \xi^{s}}{1 - \xi^{s}} = -\frac{1}{k^{2}} \sum_{\xi \in \Delta_{k}} (\xi)_{s} \xi^{-t} \sum_{j=1}^{k-1} j \xi^{js} \\
\nonumber &=& -\frac{1}{k} \sum_{j=1}^{k-1} j \sigma^{(1)}_{k}(t - js; s + 1).
\end{eqnarray}
Similarly, we can obtain an expression for \(\sigma^{(0)}_{k}(t;s)\).
\end{proof}

\subsection{Finite Trigonometric Sums}

Our generalization introduces finite trigonometric sums subject to a coprime condition, which were recently proposed as analogues to the Ramanujan sum in \cite{bruce}. This extension not only enriches the theoretical framework but also opens new avenues for practical applications in related fields.

We have two types of sums: when \( b = 0 \), the sums are direct sums, while for \( b = 1 \), the sums are analogues of the Ramanujan sum under a coprime condition. The two sums are 
$$
\sigma^{(0)}_{k}(t;s) = \frac{1}{k}\sum_{j=0}^{k-1}\alpha^{-jt}(\alpha^{j})_{s-1} \quad\quad\text{ and }\quad\quad \sigma^{(1)}_{k}(t;s) = \frac{1}{k}\sum_{\stackrel{j=0}{(j,k)=1}}^{k-1}\alpha^{-jt}(\alpha^{j})_{s-1},
$$
where $\alpha = e^{2\pi i/k}$. By Lemma \ref{lem_freq_1} we can write
\begin{eqnarray}
\nonumber \sum_{d|k}\frac{d}{k}\sigma^{(1)}_{d}(t;s) = \frac{1}{k}\sum_{d|k}\sum_{\xi\in \Delta_{d}}\xi^{-t}(\xi)_{s-1} = \frac{1}{k}\sum_{j=0}^{k-1}\alpha^{-jt}(\alpha^{j})_{s-1}=\sigma_{k}^{(0)}(t;s),
\end{eqnarray}
where $\alpha$ is a primitive $k^{th}$ root of unity. Thus, the two types of sums are connected with the Dirichlet convolution.

\begin{prop} 
$$
    k\sigma_{k}^{(0)}(t;s)=\sum_{d|k}d\sigma^{(1)}_{d}(t;s)
$$
where we assume that $\sigma^{(1)}_{d}(t;s)=0$ if $s>d$. That is, $k\sigma_{k}^{(0)}=I*(k\sigma^{(1)}_{k})$, where $*$ is the Dirichlet convolution and $I(n)=1$ constant function. By Möbius inversion we have
\[
k \sigma_{k}^{(1)}(t; s) = \sum_{d \mid k} \mu\left(\frac{k}{d}\right) d \sigma_{d}^{(0)}(t; s).
\]
 
\end{prop}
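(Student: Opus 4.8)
The plan is to read the forward identity directly off the Fourier representation in Lemma~\ref{lem_freq_1} and then obtain the inverse formula by a purely formal Möbius inversion, treating $t$ and $s$ as fixed and viewing each side as an arithmetic function of $k$. The one genuinely structural ingredient is the classical partition of the $k$-th roots of unity: writing $\xi=\alpha^{j}$, as $j$ runs over $0\le j<k$ the root $\xi$ is a \emph{primitive} $d$-th root of unity for exactly one divisor $d\mid k$, namely $d=k/\gcd(j,k)$, and each primitive $d$-th root occurs precisely once. Grouping the defining sum for $\sigma^{(0)}_{k}(t;s)$ according to this exact order $d$ converts the unrestricted sum over all $k$-th roots into a sum over divisors $d\mid k$ of sums restricted to $\Delta_{d}$; this is exactly the reindexing already carried out in the display preceding the statement.

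Concretely, I would start from $\sigma^{(0)}_{k}(t;s)=\frac{1}{k}\sum_{j=0}^{k-1}\alpha^{-jt}(\alpha^{j})_{s-1}$ and rewrite it as $\frac{1}{k}\sum_{d\mid k}\sum_{\xi\in\Delta_{d}}\xi^{-t}(\xi)_{s-1}$. By Lemma~\ref{lem_freq_1} applied at modulus $d$, the inner sum equals $d\,\sigma^{(1)}_{d}(t;s)$, which gives $k\sigma^{(0)}_{k}(t;s)=\sum_{d\mid k}d\,\sigma^{(1)}_{d}(t;s)$ at once. The convention $\sigma^{(1)}_{d}(t;s)=0$ for $s>d$ is not an arbitrary stipulation but is forced by the same formula: when $s-1\ge d$, the product $(\xi)_{s-1}$ contains a factor $(1-\xi^{m})$ with $d\mid m$, which vanishes for every $\xi\in\Delta_{d}$ since such $\xi$ has order exactly $d$; hence the inner sum is automatically zero, and the identity continues to hold with the stated convention.

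For the second statement I would recast the first as a Dirichlet convolution. Setting $f(k)=k\sigma^{(0)}_{k}(t;s)$ and $g(k)=k\sigma^{(1)}_{k}(t;s)$, the identity reads $f(k)=\sum_{d\mid k}g(d)=(I*g)(k)$ with $I(n)\equiv 1$. Since $\mu$ is the Dirichlet inverse of $I$ (i.e.\ $\mu*I=\varepsilon$, the convolution identity), convolving both sides with $\mu$ yields $g=\mu*f$, that is, $k\sigma^{(1)}_{k}(t;s)=\sum_{d\mid k}\mu(k/d)\,d\,\sigma^{(0)}_{d}(t;s)$, exactly as claimed. I expect the only points requiring care to be the bookkeeping of the root-of-unity partition and the boundary behaviour of the convention for $s>d$; once these are pinned down, both halves are routine, the first being a reindexing plus Lemma~\ref{lem_freq_1} and the second a textbook application of Möbius inversion.
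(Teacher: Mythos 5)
Your proposal is correct and follows essentially the same route as the paper: the displayed computation preceding the proposition partitions the $k$-th roots of unity by their exact order $d\mid k$, identifies each inner sum over $\Delta_{d}$ via Lemma~\ref{lem_freq_1}, and then applies M\"obius inversion to the resulting Dirichlet convolution. Your additional observation that the convention $\sigma^{(1)}_{d}(t;s)=0$ for $s>d$ is forced because $(\xi)_{s-1}$ then contains the vanishing factor $(1-\xi^{d})$ is a small but welcome tightening of a point the paper leaves implicit.
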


As one would expect, the analogues of the Ramanujan sums considered in \cite{bruce} are indeed related to the Ramanujan sum. We will first explain the idea through the simple case $\sigma_{k}^{(b)}(0;2)$. Consider 
$$
\sigma^{(0)}_{k}(0;2) = \frac{1}{k}\sum_{j=0}^{k-1}(1-\xi^{j}),
$$
where $\xi=e^{2\pi i/k}$. Taking the factor $\xi^{l/2}$ out, multiplying and dividing by $2i$ we get 
$$
\sigma^{(0)}_{k}(0;2) = -2i\frac{1}{k}\sum_{j=0}^{k-1}\left(\frac{\xi^{j/2}-\xi^{-j/2}}{2i}\right)\xi^{j/2}.
$$
Using the formulae $\sin \theta=\frac{e^{i\theta}-e^{-i\theta}}{2i}$ and $e^{i\theta}-\cos \theta + i \sin \theta$, we have 
$$
\sigma^{(0)}_{k}(0;2) = -2i\frac{1}{k}\sum_{j=0}^{k-1}\sin \frac{\pi j}{k}(\cos \frac{\pi j}{k}+i\sin \frac{\pi j}{k}).
$$
Comparing the real parts we have the identity
$$
\sigma^{(0)}_{k}(0;2) = \frac{2}{k}\sum_{j=0}^{k-1} \sin^{2}\left(\frac{\pi j}{k}\right).
$$
Similarly, we have  
$$
\sigma^{(1)}_{k}(0;2) = \frac{2}{k}\sum_{\stackrel{j=0}{(j,k)=1}}^{k-1} \sin^{2}\left(\frac{\pi j}{k}\right).
$$
Further, the values of $\sigma_{k}^{(b)}(0;2)$ can be easily determined: 
$$
\sigma^{(1)}_{k}(0;2)=\frac{1}{k}\sum^{k}_{\stackrel{j=1}{(j,k)=1}}(1-\xi^{j})=\frac{1}{k}\sum^{k}_{\stackrel{j=1}{(j,k)=1}}\xi^{0}-\frac{1}{k}\sum^{k}_{\stackrel{j=1}{(j,k)=1}}\xi^{j}=\frac{c_{k}(k)-c_{k}(k-1)}{k},
$$
where $\xi=e^{2\pi i/k}$.

\begin{thm} For $k\geq 1$ we have the following identities: 
$$
\sum_{j=0}^{k-1} \sin^{2}\left(\frac{\pi j}{k}\right)=\frac{k}{2}
$$
and 
$$
\sum_{\stackrel{j=0}{(j,k)=1}}^{k-1} \sin^{2}\left(\frac{\pi j}{k}\right) = \frac{c_{k}(k)-c_{k}(k-1)}{2}.
$$ 
\end{thm}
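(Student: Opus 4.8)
The plan is to read off both identities directly from the computations already carried out for $\sigma^{(b)}_{k}(0;2)$ in the paragraph preceding the theorem, so that no fresh trigonometric machinery is required. That passage established the two real-part identities
$$
\sigma^{(0)}_{k}(0;2) = \frac{2}{k}\sum_{j=0}^{k-1}\sin^{2}\left(\frac{\pi j}{k}\right) \quad\text{and}\quad \sigma^{(1)}_{k}(0;2) = \frac{2}{k}\sum_{\substack{j=0\\(j,k)=1}}^{k-1}\sin^{2}\left(\frac{\pi j}{k}\right),
$$
so each of the two finite trigonometric sums is precisely $\tfrac{k}{2}$ times the corresponding generalized Ramanujan sum at $t=0$, $s=2$. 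Hence it suffices to evaluate $\sigma^{(0)}_{k}(0;2)$ and $\sigma^{(1)}_{k}(0;2)$ by independent means and substitute.

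For the first identity I would evaluate $\sigma^{(0)}_{k}(0;2)$ straight from Definition \ref{defn_GRS}. Here $R^{(0)}_{k,2}(q) = (q)_{1} = 1-q$, and since this has degree $1 < k$ (for $k \geq 2$) it is already its own remainder modulo $1-q^{k}$; equivalently, $\frac{1}{k}\sum_{j=0}^{k-1}(1-\alpha^{j}) = \frac{1}{k}(k-0)=1$ by the vanishing of the sum of the nontrivial $k$-th roots of unity. Thus the constant coefficient is $\sigma^{(0)}_{k}(0;2)=1$, and substituting into the first displayed identity gives $\sum_{j=0}^{k-1}\sin^{2}(\pi j/k) = k/2$.

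For the second identity I would simply invoke the value of $\sigma^{(1)}_{k}(0;2)$ derived in the display immediately before the theorem, namely $\sigma^{(1)}_{k}(0;2) = \frac{c_{k}(k)-c_{k}(k-1)}{k}$, obtained there by separating the constant and linear terms of $\frac{1}{k}\sum_{(j,k)=1}(1-\xi^{j})$ and recognizing the two pieces as $\frac{1}{k}c_{k}(k)$ and $-\frac{1}{k}c_{k}(k-1)$. Equating this with $\frac{2}{k}\sum_{(j,k)=1}\sin^{2}(\pi j/k)$ and cancelling the common factor $\frac{2}{k}$ yields $\sum_{(j,k)=1}\sin^{2}(\pi j/k) = \frac{c_{k}(k)-c_{k}(k-1)}{2}$.

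There is essentially no hard step: both identities are corollaries of the real-part extraction performed above together with a one-line evaluation of $\sigma^{(b)}_{k}(0;2)$. The only point demanding care is the degenerate boundary case $k=1$ for the first identity, where $(q)_{1}\equiv 0 \bmod 1-q$ forces $\sigma^{(0)}_{1}(0;2)=0$ and both sides of the trigonometric sum vanish, so the stated value $k/2$ genuinely requires $k\geq 2$. For a fully self-contained alternative one could bypass the $\sigma$-machinery via $\sin^{2}\theta = \tfrac{1}{2}(1-\cos 2\theta)$, using $\sum_{(j,k)=1}\cos(2\pi j/k)=c_{k}(1)$ together with $c_{k}(k)=\varphi(k)$ and $c_{k}(k-1)=c_{k}(1)$, and checking that the resulting $\tfrac{1}{2}(\varphi(k)-\mu(k))$ agrees with the stated right-hand side.
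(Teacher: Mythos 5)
Your proof is correct and takes essentially the same route as the paper, which presents this theorem as an immediate consequence of the preceding displayed computations: the real-part extraction giving $\sigma^{(b)}_{k}(0;2)=\tfrac{2}{k}\sum\sin^{2}(\pi j/k)$, the evaluation $\sigma^{(0)}_{k}(0;2)=1$, and the identity $\sigma^{(1)}_{k}(0;2)=\tfrac{c_{k}(k)-c_{k}(k-1)}{k}$. Your observation that the first identity genuinely requires $k\geq 2$ (both sides disagree at $k=1$, where the left side is $0$ and the right side is $\tfrac{1}{2}$) is a valid catch that the paper's statement ``for $k\geq 1$'' overlooks.
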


Now, consider $\sigma_{k}^{(b)}(0;3)$. Taking a similar approach as before we have 
$$
\sigma_{k}^{(0)}(0;3)=\frac{1}{k}\sum_{j=0}^{k-1}(1-\xi^{j})(1-\xi^{2j}),
$$
where $\xi=e^{2\pi i/k}$. By pulling out the factors $\xi^{j/2}$ and $\xi^{2j/2}$ and multiplying and diving by $2i$ we have 
$$
\sigma_{k}^{(0)}(0;3)=-\frac{4}{k}\sum_{j=0}^{k-1}\frac{(\xi^{j/2}-\xi^{-j/2})}{2i}\frac{(\xi^{j}-\xi^{-j})}{2i}\xi^{3j/2}.
$$
Again, using the trigonometric formulae and comparing the real and imaginary parts, we have the following:

\begin{thm} For $k>2$ we have the following results:

\begin{enumerate}
    \item $$
\sum_{l=0}^{k-1}\sin\left(\frac{\pi l}{k}\right)\sin\left(\frac{2\pi l}{k}\right)\cos\left(\frac{3\pi l}{k}\right) = \left\{\begin{matrix}
    -\frac{k}{4} & k\geq 4 \\
    -\frac{3}{2} & k=3
\end{matrix}\right.
$$

\item 
$$
\sum_{\stackrel{1\leq l\leq k}{(l,k)=1}}\sin\left(\frac{\pi l}{k}\right)\sin\left(\frac{2\pi l}{k}\right)\cos\left(\frac{3\pi l}{k}\right) =\frac{c_{k}(k-1)+c_{k}(k-2)-c_{k}(k)-c_{k}(k-3)}{4}
$$

\item 
$$
\sum_{l=0}^{k-1}\sin\left(\frac{\pi l}{k}\right)\sin\left(\frac{2\pi l}{k}\right)\sin\left(\frac{3\pi l}{k}\right) =0
$$
\item 
$$
\sum_{\stackrel{1\leq l\leq k}{(l,k)=1}}\sin\left(\frac{\pi l}{k}\right)\sin\left(\frac{2\pi l}{k}\right)\sin\left(\frac{3\pi l}{k}\right)=0.
$$
\end{enumerate}
  
\end{thm}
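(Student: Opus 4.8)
The plan is to build directly on the exponential-sum rewriting the paper has already carried out. For each index one has the term-by-term identity $(1-\xi^{j})(1-\xi^{2j}) = -4\sin(\pi j/k)\sin(2\pi j/k)\,e^{3\pi i j/k}$ with $\xi = e^{2\pi i/k}$, so that $\sigma_{k}^{(0)}(0;3) = -\tfrac{4}{k}\sum_{j=0}^{k-1}\sin(\pi j/k)\sin(2\pi j/k)e^{3\pi i j/k}$, and restricting the same identity to coprime indices gives $\sigma_{k}^{(1)}(0;3) = -\tfrac{4}{k}\sum_{(j,k)=1}\sin(\pi j/k)\sin(2\pi j/k)e^{3\pi i j/k}$. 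The crucial structural observation is that both constants are real: $\sigma_{k}^{(0)}(0;3)$ is the constant coefficient of the integer polynomial $(q)_{2}$ reduced modulo $1-q^{k}$, hence an integer, while $\sigma_{k}^{(1)}(0;3)=\tfrac1k\sum_{\xi\in\Delta_k}(\xi)_2$ is real because $\Delta_{k}$ is closed under $\xi\mapsto\xi^{-1}$ (since $(h,k)=1\iff(k-h,k)=1$) and $(\xi)_2\mapsto\overline{(\xi)_2}$, so the sum equals its own conjugate. Writing $e^{3\pi i j/k}=\cos(3\pi j/k)+i\sin(3\pi j/k)$ and comparing real and imaginary parts of these two identities is what drives the whole theorem.

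The imaginary parts dispose of (3) and (4) at once: since the left-hand sides are real, the sine--sine--sine sums (over all $j$ and over coprime $j$ respectively) equal the imaginary part of a real number and therefore vanish, where the $j=0$ term is zero so it may be freely included. For (1) I would compare real parts to obtain $\sum_{l=0}^{k-1}\sin(\pi l/k)\sin(2\pi l/k)\cos(3\pi l/k)=-\tfrac{k}{4}\,\sigma_{k}^{(0)}(0;3)$, and then evaluate the constant by reducing $(q)_{2}=1-q-q^{2}+q^{3}$ modulo $1-q^{k}$: for $k\geq 4$ the polynomial already has degree below $k$, so its constant coefficient is $1$ and the sum is $-k/4$; for $k=3$ the relation $q^{3}\equiv 1$ contributes an extra $1$, giving $\sigma_{3}^{(0)}(0;3)=2$ and the sum $-3/2$. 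Equivalently one evaluates $\tfrac1k\sum_{j}(1-\xi^{j}-\xi^{2j}+\xi^{3j})$ using orthogonality, $\sum_{j=0}^{k-1}\xi^{mj}\in\{0,k\}$.

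For (2) the same real-part comparison gives $\sum_{(l,k)=1}\sin(\pi l/k)\sin(2\pi l/k)\cos(3\pi l/k)=-\tfrac{k}{4}\,\sigma_{k}^{(1)}(0;3)$, so it remains to express $\sigma_{k}^{(1)}(0;3)$ through Ramanujan sums. Expanding $(1-\alpha^{j})(1-\alpha^{2j})=1-\alpha^{j}-\alpha^{2j}+\alpha^{3j}$ and summing over coprime $j$, I would identify each partial sum as a Ramanujan sum via $\sum_{(j,k)=1}\alpha^{mj}=c_{k}(-m)=c_{k}(k-m)$ (using the definition $c_{k}(t)=\sum_{\xi\in\Delta_{k}}\xi^{-t}$ together with periodicity), yielding $\sigma_{k}^{(1)}(0;3)=\tfrac{1}{k}\bigl(c_{k}(k)-c_{k}(k-1)-c_{k}(k-2)+c_{k}(k-3)\bigr)$; substituting and simplifying produces the asserted $\tfrac{1}{4}\bigl(c_{k}(k-1)+c_{k}(k-2)-c_{k}(k)-c_{k}(k-3)\bigr)$. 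The computation is essentially routine once the exponential rewriting is in place; the only points requiring care are the justification of reality (which legitimizes separating real and imaginary parts), the bookkeeping of the index shift $c_{k}(-m)=c_{k}(k-m)$, and the exceptional reduction modulo $1-q^{k}$ at $k=3$ in part (1). I expect this last special-case split to be the main, if mild, obstacle.
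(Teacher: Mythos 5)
Your proposal is correct and follows essentially the same route as the paper: factor out $\xi^{3j/2}$ from $(1-\xi^{j})(1-\xi^{2j})$ to get $-4\sin(\pi j/k)\sin(2\pi j/k)e^{3\pi i j/k}$, then compare real and imaginary parts of $\sigma_{k}^{(b)}(0;3)$, using the paper's earlier identity $\sigma_{k}^{(1)}(t;3)=\tfrac{1}{k}\bigl(c_{k}(t)-c_{k}(t-1)-c_{k}(t-2)+c_{k}(t-3)\bigr)$ for part (2). The paper only sketches this ("comparing the real and imaginary parts"), and your write-up correctly supplies the details it omits, notably the reality argument justifying the separation of parts and the $k=3$ reduction of $q^{3}\equiv 1$ that produces the exceptional value $-\tfrac{3}{2}$.
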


We now consider the case $s$ divisible by $4$ and $k\geq s(s+1)/4$. 
$$
\sigma^{(1)}_{k}(k-s(s+1)/4;s) = \frac{1}{k}\sum_{\xi \in \Delta_{k}} (1-\xi)(1-\xi^{2})\cdots (1-\xi^{s})\xi^{-(k-s(s+1)/4)}.
$$
Taking a factor $\xi^{j/2}$ from each of the $j^{th}$ factor, multiplying and dividing by $2i$ we get 

$$
\sigma^{(0)}_{k}\left(k-\frac{s(s+1)}{4};s+1\right)=2^{s}\sum_{h=1}^{k} \sin\frac{h\pi}{k}\cdots \sin \frac{sh\pi}{k}
$$

$$
\sigma^{(1)}_{k}\left(k-\frac{s(s+1)}{4};s+1\right)=2^{s}\sum_{\stackrel{h=1}{(h,k)=1}}^{k} \sin\frac{h\pi}{k}\cdots \sin \frac{sh\pi}{k}
$$

\begin{thm} Let $k$ and $s$ be two positive integers such that $s$ is divisible by $4$ and $k/p\leq s\leq k$ for $p$ the smallest prime divisor of $k$. Then, the following equation holds: 
$$
\sum_{\stackrel{h=1}{(h,k)=1}}^{k} \sin\frac{h\pi}{k}\cdots \sin \frac{sh\pi}{k}=\sum_{h=1}^{k} \sin\frac{h\pi}{k}\cdots \sin \frac{sh\pi}{k} = \frac{1}{2^{s}}\sigma_{k}^{(b)}\left(k-\frac{s(s+1)}{4};s+1\right).
$$    
\end{thm}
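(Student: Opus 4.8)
The plan is to reduce the whole statement to a single factorization of the restricted product $(\xi)_{s}=\prod_{j=1}^{s}(1-\xi^{j})$ into a product of sines, and then to match the resulting real sum against the finite Fourier formula for $\sigma_{k}^{(b)}$ supplied by Lemma \ref{lem_freq_1}. I would carry this out in three moves: first the coprimality collapse (the left equality), then the trigonometric factorization, and finally the identification with $\sigma_{k}^{(b)}$.

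First I would dispose of the first equality. Writing $\xi=\alpha^{h}$ with $\alpha=e^{2\pi i/k}$, I claim each summand $\prod_{j=1}^{s}\sin\frac{\pi hj}{k}$ vanishes unless $(h,k)=1$. Indeed, if $d=\gcd(h,k)>1$ then $d\ge p$, so $j_{0}:=k/d$ is an integer with $1\le j_{0}\le k/p\le s$; the $j_{0}$-th factor is $\sin\frac{\pi h j_{0}}{k}=\sin\!\big(\pi\tfrac{h}{d}\big)=0$ since $d\mid h$. Hence the sum over $1\le h\le k$ collapses onto the sum over $(h,k)=1$. This is exactly where the hypothesis $s\ge k/p$ is used, and it is the same mechanism that forces $\sigma_{k}^{(0)}=\sigma_{k}^{(1)}$ in the lemma establishing equality for $s\ge k/p$ (so that the statement is independent of $b$).

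Next, the main computation. For each factor I use $1-\xi^{j}=1-e^{2\pi i hj/k}=-2i\,e^{i\pi hj/k}\sin\frac{\pi hj}{k}$, and multiply over $j=1,\dots,s$ to get
\[
(\xi)_{s}=(-2i)^{s}\,e^{\,i\pi h\,s(s+1)/(2k)}\prod_{j=1}^{s}\sin\frac{\pi hj}{k}.
\]
Here the divisibility $4\mid s$ does two jobs at once: it makes $(-2i)^{s}=2^{s}$ real and positive, and it makes $s(s+1)/2$ an even integer, so the accumulated phase is $e^{\,i\pi h\,s(s+1)/(2k)}=\alpha^{h\,s(s+1)/4}=\xi^{s(s+1)/4}$. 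Feeding $(\xi)_{s}=2^{s}\xi^{s(s+1)/4}\prod_{j}\sin\frac{\pi hj}{k}$ into $\sigma_{k}^{(b)}(t;s+1)=\tfrac1k\sum_{\xi\in\Delta_{k}}(\xi)_{s}\xi^{-t}$ (valid since $s+1\ge k/p$) gives $\sigma_{k}^{(b)}(t;s+1)=\tfrac{2^{s}}{k}\sum_{\xi\in\Delta_{k}}\xi^{\,s(s+1)/4-t}\prod_{j}\sin\frac{\pi hj}{k}$, and choosing $t$ so that the residual phase is trivial collapses $\xi^{\,s(s+1)/4-t}$ to $1$, leaving precisely $2^{s}$ times the (coprime, hence full) sine sum, divided by $k$.

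The step I expect to be the genuine obstacle is the bookkeeping in that last move: keeping the Fourier normalization $\tfrac1k$ and pinning down the exact argument forced by the phase $\xi^{s(s+1)/4}$. Tracking these carefully, the identity I actually obtain is
\[
\sum_{\substack{h=1\\(h,k)=1}}^{k}\prod_{j=1}^{s}\sin\frac{\pi hj}{k}=\sum_{h=1}^{k}\prod_{j=1}^{s}\sin\frac{\pi hj}{k}=\frac{k}{2^{s}}\,\sigma_{k}^{(b)}\!\Big(\tfrac{s(s+1)}{4};\,s+1\Big),
\]
with constant $k/2^{s}$ and argument $s(s+1)/4\equiv-(k-s(s+1)/4)\pmod{k}$. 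Before committing to the displayed form with $\tfrac1{2^{s}}$ and argument $k-s(s+1)/4$, I would check the small case $k=8,\ s=4$: the sine sum equals $1$, while $\sigma_{8}^{(b)}(5;5)=\tfrac12 c_{8}(0)=2$, so the matching constant is indeed $k/2^{s}=\tfrac12$ and the matching argument is $s(s+1)/4=5$, not $k-s(s+1)/4=3$ (for which $\sigma_{8}^{(b)}(3;5)=\tfrac12 c_{8}(2)=0$). Everything else is routine once the phase and normalization are reconciled; the edge case $s=k$ is consistent because both sides vanish.
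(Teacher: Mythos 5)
Your proof follows the same route the paper itself takes just before the theorem statement: pull a factor $\xi^{j/2}$ out of each term $1-\xi^{j}$, use $4\mid s$ to turn $(-2i)^{s}$ into $2^{s}$ and to make the accumulated phase $\xi^{s(s+1)/4}$ an honest power of $\xi$, and then read the sine product off against the finite Fourier formula of Lemma~\ref{lem_freq_1}; your coprimality collapse via $j_{0}=k/\gcd(h,k)\leq k/p\leq s$ is likewise exactly the mechanism the hypothesis $s\geq k/p$ is there for. The substantive point is that your bookkeeping is correct and the displayed theorem is not. Since $(\xi)_{s}=2^{s}\xi^{s(s+1)/4}\prod_{j=1}^{s}\sin\frac{\pi hj}{k}$, the phase in $\frac1k\sum_{\xi\in\Delta_{k}}(\xi)_{s}\xi^{-t}$ cancels precisely when $t\equiv s(s+1)/4 \pmod k$; taking $t=k-s(s+1)/4$ instead leaves a residual factor $\xi^{s(s+1)/2}$, and $\sigma_{k}^{(b)}(t;\cdot)$ is not symmetric under $t\mapsto -t$, so the two arguments are genuinely inequivalent. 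Moreover the Fourier normalization $\frac1k$ forces the constant $\frac{k}{2^{s}}$, not $\frac{1}{2^{s}}$. Your test case settles it: for $k=8$, $s=4$ one computes $(q)_{4}\equiv -2q+2q^{5}\pmod{1-q^{8}}$, so $\sigma_{8}^{(b)}(5;5)=2$ and $\sigma_{8}^{(b)}(3;5)=0$, while the sine sum equals $1=\frac{8}{16}\cdot 2$; the paper's right-hand side gives $\frac{1}{16}\cdot 0=0$. So: correct argument, same method as the paper, and you have identified a real error (a missing factor of $k$ and a sign flip in the argument of $\sigma$) in both the theorem and the derivation preceding it.
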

Suppose $s=4$ then from the above equation we have
$$
\frac{1}{2^{4}}\sigma_{k}^{(b)}\left(k-5;5\right)=\sum_{\stackrel{h=1}{(h,k)=1}}^{k} \sin\frac{h\pi}{k}\sin \frac{2h\pi}{k}\sin \frac{3h\pi}{k}\sin \frac{4h\pi}{k}.
$$
For sufficiently large $k$, that is, $k$ larger than $30$, these values turn out to be zero. 

\subsection{Combinatorial Interpretation}

As stated in the Introduction, our algebraic generalization of the Ramanujan sum in Definition \ref{defn_GRS} directly reflects combinatorial significance through the coefficients of the restricted \( q \)-product. Suppose that we write the $q$-product in the form 
$$
(q)_{s-1}=(1-q)\cdots (1-q^{s-1}) = \sum_{m=0}^{s(s-1)/2} a_{m}q^{m}.
$$
Then, the coefficient $a_{m}$ can be interpreted as the difference in the number of even-size partitions $(E_{s}(m))$ and odd-size partitions $(O_{s}(m))$ of $m$ considering a distinct part of the set $\{1,2,\cdots,s-1\}$, namely $a_{m}=E_{s}(m)-O_{s}(m)$.

The reminder of a polynomial with $1-q^{k}$ can also be treated as sieving of the coefficients of a polynomial. In particular, we have 

$$
R_{k,s}^{(0)}(q)=\sum_{t=0}^{k-1} \sigma_{k}^{(0)}(t;s)q^{t}=(q)_{s-1} \mod 1-q^{k}. 
$$
Therefore,
$$
\sigma_{k}^{(0)}(t;s)=\sum_{j=0}^{\lfloor (k-1)/2 \rfloor} a_{jk+t}=\sum_{j=0}^{\lfloor (k-1)/2 \rfloor} \left(E_{s}(jk+t)-O_{s}(jk+t)\right).
$$
The case $s=k$ corresponding to the Ramanujan sum is well known \cite{Ramanathan}. 

\begin{cor} \label{comb_0}
For $k>0$ and $1\leq s\leq k$, we can write
\begin{equation}\label{GRS_sgn}
\sigma^{(0)}_{k}(t;s)=\sum_{\textnormal{sum}_{k}(A)= t} (-1)^{|A|},\quad \textnormal{ for } A\subseteq \{1,2,\cdots,s-1\},
\end{equation}
where $|A|$ is the cardinality of $A$ and $\textnormal{sum}_{k}(A)$ stands for the sum modulo $k$ of the elements of $A$, with the convention $\textnormal{sum}_{k}(\emptyset)=0$.
\end{cor}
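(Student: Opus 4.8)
The plan is to expand the restricted $q$-product directly as a signed generating function indexed by subsets, and then read off its coefficients under reduction modulo $1 - q^k$. By Definition \ref{defn_GRS} with $b = 0$ the factor $(q)_{k-1}^{b}$ is trivially $1$, so $R_{k,s}^{(0)}(q) = (q)_{s-1} \bmod (1 - q^k)$ and $\sigma_k^{(0)}(t;s)$ is simply the coefficient of $q^t$ in this remainder. Thus the whole statement reduces to understanding the coefficients of $(q)_{s-1}$ and how they regroup when exponents are taken modulo $k$.

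First I would expand the product combinatorially. In $(q)_{s-1} = \prod_{j=1}^{s-1}(1 - q^j)$, selecting from each factor either the term $1$ or the term $-q^j$ is the same as choosing the subset $A \subseteq \{1, \ldots, s-1\}$ of indices where $-q^j$ is taken; such a selection contributes $\prod_{j \in A}(-q^j) = (-1)^{|A|} q^{\sum_{j \in A} j}$. Summing over all subsets yields the identity
$$
(q)_{s-1} = \sum_{A \subseteq \{1,\ldots,s-1\}} (-1)^{|A|}\, q^{\sum_{j \in A} j},
$$
whose coefficient of $q^m$ is precisely $E_s(m) - O_s(m) = a_m$, recovering the combinatorial reading of $a_m$ stated above.

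Next I would carry out the reduction modulo $1 - q^k$. Since $q^k \equiv 1$, each monomial $q^{\sum_{j \in A} j}$ collapses to $q^{\textnormal{sum}_k(A)}$ with exponent in $\{0, \ldots, k-1\}$, so extracting the coefficient of $q^t$ from the remainder amounts to summing the signs $(-1)^{|A|}$ over exactly those subsets $A$ with $\textnormal{sum}_k(A) = t$. This gives
$$
\sigma_k^{(0)}(t;s) = \sum_{\textnormal{sum}_k(A) = t} (-1)^{|A|},
$$
which is the asserted formula, now written as a single signed sum over subsets; it is the same regrouping recorded before the corollary, which collects all $a_m$ with $m \equiv t \pmod{k}$.

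I do not expect a genuine obstacle. The only point requiring a word of care is that reducing $\sum_m a_m q^m$ modulo $1 - q^k$ collects \emph{all} coefficients whose exponents are congruent to $t$ simultaneously, rather than needing an iterated long division; this is immediate from $q^m \equiv q^{m \bmod k}$. Finally, the convention $\textnormal{sum}_k(\emptyset) = 0$ is consistent with the empty-set term $(-1)^0 q^0 = 1$ in the expansion, so the $t = 0$ contribution is accounted for correctly.
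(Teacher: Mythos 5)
Your proposal is correct and follows essentially the same route as the paper: expand $(q)_{s-1}$ as a signed sum over subsets of $\{1,\ldots,s-1\}$ (equivalently, distinct-part partitions, which is how the paper phrases it via $a_m = E_s(m)-O_s(m)$) and then sieve the coefficients modulo $1-q^k$. The only difference is cosmetic --- you work directly with subsets and a single signed sum, whereas the paper separates the even- and odd-cardinality counts before taking their difference.
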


\begin{proof}
The largest sum possible with distinct parts from $\{1,2,\cdots,s-1\}$ is $s(s-1)/2$. By the combinatorial interpretation given above and iterating over various sums considering the number of even-size and odd-size sums we have:
$$
\sigma_{k}^{(0)}(t;s)=\sum^{s(s-1)/2}_{\stackrel{l=0}{l\% k=t}} \left(E_{s}(l)-O_{s}(l)\right)=\sum_{\stackrel{\textnormal{sum}_{k}(A)= t}{|A| \textnormal{ even}}} 1\ -\sum_{\stackrel{\textnormal{sum}_{k}(B)= t}{|B| \textnormal{ odd}}} 1,
$$
where $A,B\subseteq \{1,2,\cdots,s-1\}$.
\end{proof}

The combinatorial interpretation of $\sigma^{(1)}_{k}(t;s)$ is a bit cumbersome but it is also straightforward. By the sieving of coefficients, for an arbitrary $s\in \{0,1,\cdots,k-1\}$, we have the following interpretation from the definition:
$$
\sigma^{(1)}_{k}(t;s)=\frac{1}{k} \sum_{j=0}^{\lfloor\frac{k-1}{2}\rfloor}\left(\tilde{E}_{k,s}(jk+t)-\tilde{O}_{k,s}(jk+t)\right).
$$
Here $\tilde{E}_{k,s}(l)$ is the number of partitions of $l$ into an even number of parts each less than $k$ and permitting parts of size $< s$ at most twice and others at most once. Similarly, $\tilde{O}_{k,s}(l)$ is the number of partitions of $l$ into an odd number of parts each less than $k$ and permitting parts of size $< s$ at most twice and others at most once.

From the integrality of \( \sigma_{k}^{(1)}(t; s) \), for \( s \geq \frac{k}{p} \), where \( p \) is the smallest prime divisor of \( k \), we have demonstrated that the difference in parity is a multiple of \( k \) for sufficiently large \( s \).

From a combinatorial perspective, our generalization accurately captures the parity difference among distinct partitions. This is particularly evident when \( s \) divides \( k \). If \( s \) divides \( k \), then \( 1 - q^s \) divides \( 1 - q^k \). Consequently,
\[
R_{s}(q) = R_{k,s}^{(0)}(q) \mod 1 - q^s.
\]
By sieving the indices of the coefficients of \( R_{k,s}(q) \) with \( s \) and comparing terms from the above equation, we obtain the following result:

\begin{prop}
For positive integers \( k \) and \( s \) with \( s \mid k \), we have
\[
c_{s}(t) = \sum_{j=0}^{k/s-1} \sigma_{k}^{(0)}(js + t; s).
\]
\end{prop}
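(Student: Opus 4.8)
The plan is to exploit the divisibility $(1-q^s)\mid(1-q^k)$, which holds precisely because $s\mid k$, and to track what happens to the defining congruence of $\sigma_k^{(0)}(\cdot\,;s)$ when the modulus $1-q^k$ is coarsened to $1-q^s$. I first record the two congruences I will play off against each other: from Definition \ref{defn_GRS} with $b=0$,
$$
R_{k,s}^{(0)}(q)\equiv (q)_{s-1}\pmod{1-q^k},
$$
and from the classical relation \eqref{poly_rem_RS} applied with $s$ in place of $k$,
$$
R_s(q)=\sum_{t=0}^{s-1}c_s(t)\,q^t\equiv(q)_{s-1}\pmod{1-q^s}.
$$

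Next I would pass the first congruence through the divisor $1-q^s$. Writing $1-q^k=h(q)\,(1-q^s)$ and $(q)_{s-1}=g(q)\,(1-q^k)+R_{k,s}^{(0)}(q)$, substitution gives $(q)_{s-1}=g(q)h(q)\,(1-q^s)+R_{k,s}^{(0)}(q)$, so that $R_{k,s}^{(0)}(q)\equiv(q)_{s-1}\pmod{1-q^s}$. Combining this with the Ramanujan congruence above yields $R_{k,s}^{(0)}(q)\equiv R_s(q)\pmod{1-q^s}$. Since $R_s(q)$ has degree at most $s-1$, and so does the canonical remainder of $R_{k,s}^{(0)}(q)$ modulo $1-q^s$, this congruence is in fact an equality of the two reduced polynomials — exactly the displayed identity $R_s(q)=R_{k,s}^{(0)}(q)\bmod(1-q^s)$ stated just before the proposition.

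Finally I carry out the sieving (coefficient-matching) step. Expanding $R_{k,s}^{(0)}(q)=\sum_{u=0}^{k-1}\sigma_k^{(0)}(u;s)\,q^u$ and reducing modulo $1-q^s$ amounts to setting $q^s=1$, i.e.\ replacing $q^u$ by $q^{\,u\bmod s}$. Writing each exponent uniquely as $u=js+t$ with $0\le t\le s-1$ and $0\le j\le k/s-1$ — this is where $s\mid k$ makes the ranges line up exactly, covering $\{0,\dots,k-1\}$ without overlap — the reduction regroups as $\sum_{t=0}^{s-1}\bigl(\sum_{j=0}^{k/s-1}\sigma_k^{(0)}(js+t;s)\bigr)q^t$. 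Matching the coefficient of $q^t$ against that of $R_s(q)$ then gives $c_s(t)=\sum_{j=0}^{k/s-1}\sigma_k^{(0)}(js+t;s)$, as claimed.

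I do not anticipate a genuine obstacle. The only point demanding a little care is the "tower of reductions" in the second step — justifying that reducing modulo $1-q^k$ and then modulo $1-q^s$ produces the same degree-$(<s)$ polynomial as reducing directly modulo $1-q^s$ — which hinges solely on $(1-q^s)\mid(1-q^k)$ and on the remainders being uniquely pinned down by their degree bound.
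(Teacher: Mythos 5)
Your proposal is correct and follows essentially the same route the paper takes: the paper likewise derives $R_{s}(q) = R_{k,s}^{(0)}(q) \bmod (1-q^{s})$ from the divisibility $(1-q^{s})\mid(1-q^{k})$ and then sieves the coefficients of $R_{k,s}^{(0)}(q)$ by residues of exponents modulo $s$. Your write-up merely makes explicit the ``tower of reductions'' and the coefficient-matching that the paper leaves as a one-line remark preceding the proposition.
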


This equation demonstrates that the generalized Ramanujan sum effectively captures the parity difference among distinct partitions. Specifically, the parity difference for \( t \mod s \) is equivalent to the sum of the parity differences for \( t + js \mod k \), where \(0\leq j <k/s.\) 

\begin{prop}
For positive integers \( k \) and \( s \) with \( s \mid k \), we have
\[
 \sum_{j=0}^{k/s-1} \sigma_{k}^{(1)}(js + t; s)=0.
\]    
\end{prop}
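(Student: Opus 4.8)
The plan is to evaluate the sum directly using the root-of-unity expression for $\sigma_k^{(1)}$ supplied by Lemma \ref{lem_freq_1}, reducing the whole quantity to a collection of geometric series, one for each primitive $k$th root of unity, each of which will vanish. Throughout I assume $s$ is a \emph{proper} divisor of $k$, so that $m := k/s \geq 2$; the case $s = k$ should be excluded, since there the sum collapses to the single term $\sigma_k^{(1)}(t;k) = c_k(t)$, which need not be zero.

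First I would substitute the formula $\sigma_k^{(1)}(rs+t;s) = \frac{1}{k}\sum_{\xi \in \Delta_k}(\xi)_{s-1}\xi^{-(rs+t)}$ into the sum over $r = 0, \ldots, m-1$ and interchange the two finite summations. Since the factor $(\xi)_{s-1}\xi^{-t}$ is independent of $r$, this yields
\[
\sum_{r=0}^{m-1}\sigma_k^{(1)}(rs+t;s) = \frac{1}{k}\sum_{\xi \in \Delta_k}(\xi)_{s-1}\xi^{-t}\sum_{r=0}^{m-1}\xi^{-rs}.
\]
The task then reduces to showing that the inner geometric sum $\sum_{r=0}^{m-1}(\xi^{-s})^{r}$ vanishes for every $\xi \in \Delta_k$.

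Next I would evaluate that geometric series. Its common ratio is $\xi^{-s}$, and its $m$th power is $\xi^{-sm} = \xi^{-k} = 1$, because every element of $\Delta_k$ is a $k$th root of unity. Hence, provided $\xi^{-s} \neq 1$, the series telescopes, giving $\frac{1-(\xi^{-s})^m}{1-\xi^{-s}} = \frac{1-1}{1-\xi^{-s}} = 0$; as this holds term by term in $\xi$, the entire outer sum is $0$.

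The one delicate point — and essentially the only content of the argument — is to confirm that $\xi^{-s} \neq 1$ for every primitive $k$th root of unity $\xi$. Writing $\xi = e^{2\pi i h/k}$ with $\gcd(h,k)=1$, the condition $\xi^{s}=1$ is equivalent to $k \mid hs$; since $\gcd(h,k)=1$, this forces $k \mid s$, which together with $s \mid k$ and $s < k$ is impossible. (Equivalently, $\xi$ has exact order $k$, so $\xi^{s}=1$ would require $k \mid s$.) Thus $\xi^{s} \neq 1$ for all $\xi \in \Delta_k$, each geometric sum above is genuinely of the vanishing type, and the proposition follows.
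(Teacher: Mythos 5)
Your argument is correct, and your observation about the edge case is a genuine catch: for $s=k$ the sum collapses to $\sigma_k^{(1)}(t;k)=c_k(t)$, which is generally nonzero (e.g.\ $c_k(0)=\phi(k)$), so the proposition as stated does require $s$ to be a proper divisor of $k$. The paper states this proposition without proof, but the surrounding text indicates the intended route: the preceding $\sigma^{(0)}$ identity is obtained by reducing $R^{(0)}_{k,s}(q)$ modulo $1-q^s$ (using $1-q^s\mid 1-q^k$) and sieving coefficients, and the natural analogue here is to observe that for $s<k$ the factor $1-q^s$ already divides $(q)_{k-1}$, so $R^{(1)}_{k,s}(q)\equiv 0 \pmod{1-q^s}$ and every sieved coefficient $\sum_{j}\sigma^{(1)}_k(js+t;s)$ vanishes. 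You instead work on the Fourier side, substituting the root-of-unity formula from Lemma \ref{lem_freq_1} and killing each term with the geometric sum $\sum_{j=0}^{m-1}\xi^{-js}=0$, which is valid because $\xi\in\Delta_k$ has exact order $k$, so $\xi^{-s}\neq 1$ while $(\xi^{-s})^{k/s}=1$. The two arguments are essentially dual to one another (the divisibility $1-q^s\mid (q)_{k-1}$ is exactly the statement that $(\xi)_{s-1}\cdots$, evaluated suitably, picks up the vanishing geometric factor); yours has the advantage of making the role of the coprimality condition and the exact order of $\xi$ explicit, while the polynomial-sieving version avoids the analytic formula entirely and runs in parallel with the paper's proof of the companion $\sigma^{(0)}$ identity. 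Either way the claim is established for $1\leq s<k$, and your exclusion of $s=k$ should be recorded as a hypothesis.
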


\begin{figure*}[htbp]
    \centering
    \begin{subfigure}[b]{0.45\textwidth}
        \centering
        \includegraphics[width=\textwidth]{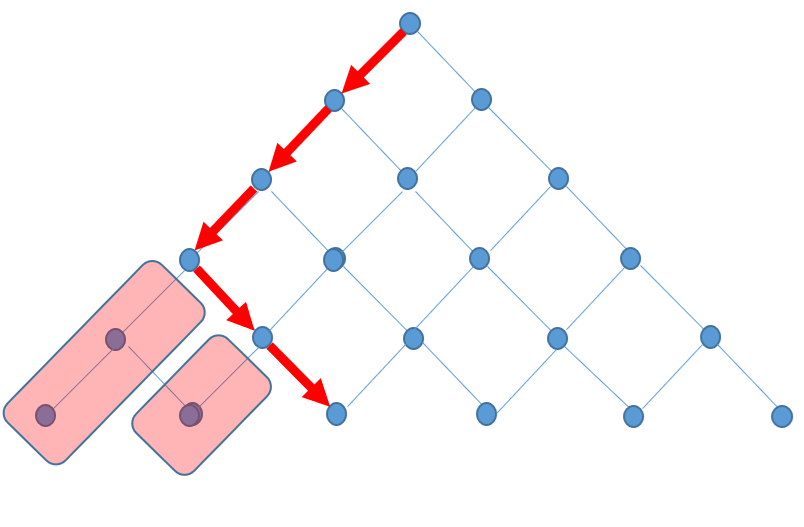}
        \caption*{\small Even number of parts: $2+1$}
        \label{fig:image1}
    \end{subfigure}%
    \hspace{0.05\textwidth} 
    \begin{subfigure}[b]{0.45\textwidth}
        \centering
        \includegraphics[width=\textwidth]{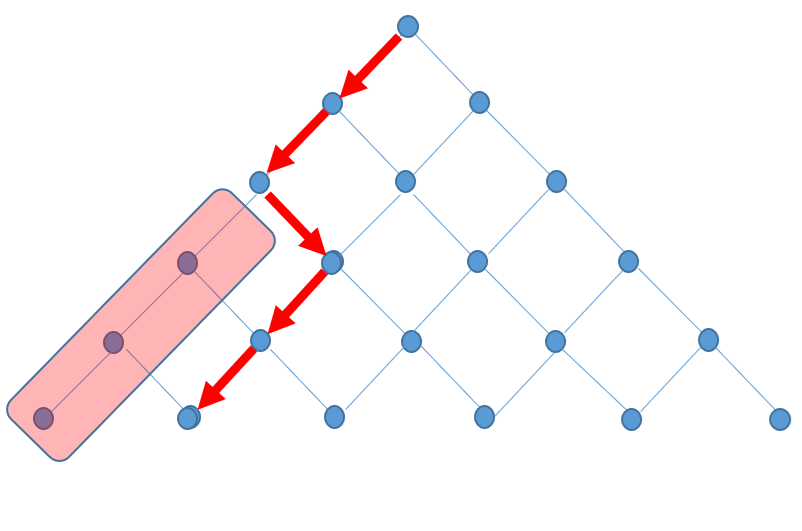}
        \caption*{\small Odd number of parts: $3$}
        \label{fig:image2}
    \end{subfigure}

    \vspace{1em} 

    \begin{subfigure}[b]{0.45\textwidth}
        \centering
        \includegraphics[width=\textwidth]{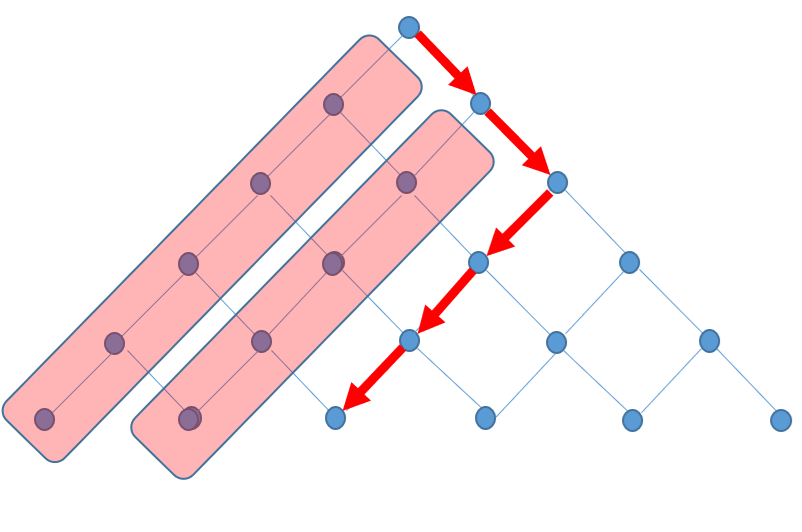}
        \caption*{\small Even number of parts: $5+4$}
        \label{fig:image3}
    \end{subfigure}%
    \hspace{0.05\textwidth} 
    \begin{subfigure}[b]{0.45\textwidth}
        \centering
        \includegraphics[width=\textwidth]{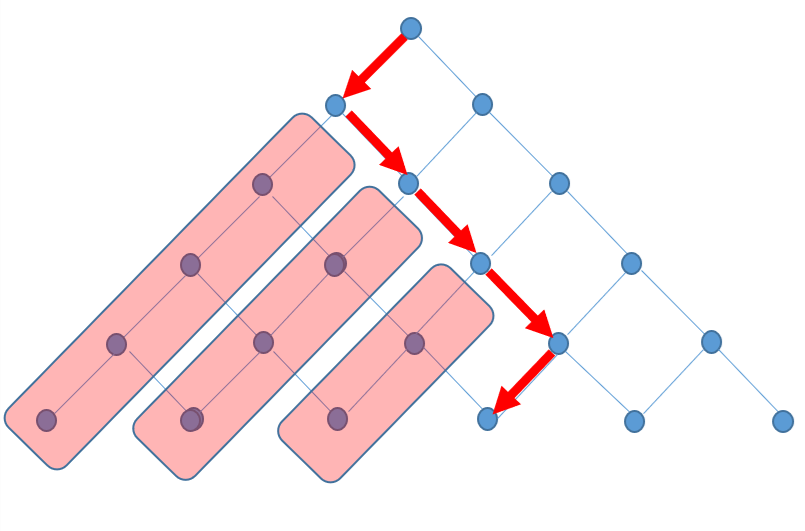}
        \caption*{\small Odd number of parts: $4+3+2$}
        \label{fig:image4}
    \end{subfigure}

    \vspace{1em} 

    \begin{subfigure}[b]{0.45\textwidth}
        \centering
        \includegraphics[width=\textwidth]{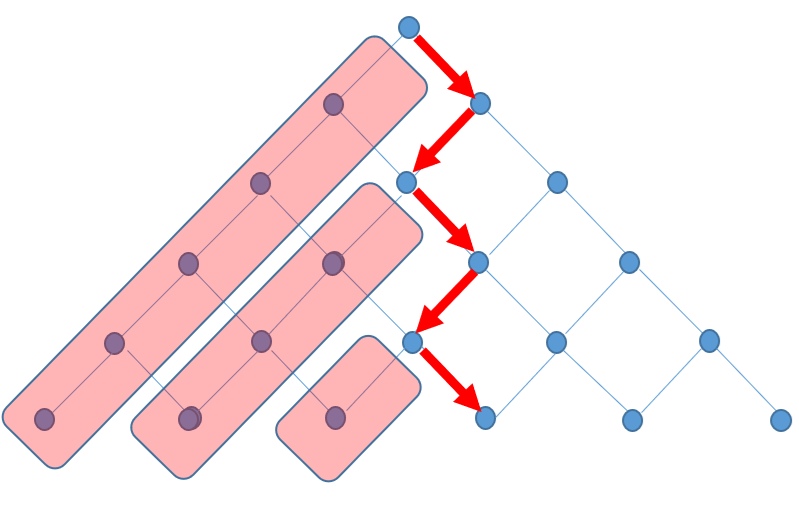}
        \caption*{\small Odd number of parts: $5+3+1$}
        \label{fig:image5}
    \end{subfigure}%
    \hspace{0.05\textwidth} 
    \begin{subfigure}[b]{0.45\textwidth}
        \centering
        \includegraphics[width=\textwidth]{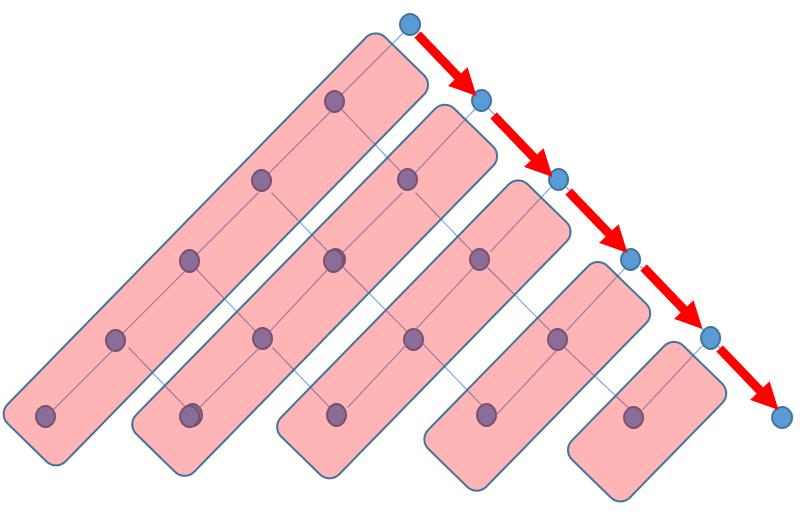}
        \caption*{\small Odd number of parts: $5+4+3+2+1$}
        \label{fig:image6}
    \end{subfigure}

    \caption{\small Case \( k = 6 \) and \( s = 6 \): \( c_{6}(3) = \# \text{even partitions} - \# \text{odd partitions} = 2 - 4 = -2 \). Alternatively, consider \( k = 12 \) and \( s = 6 \) to compute $\sigma_{12}(3;6)$ and $\sigma_{12}(9;6)$ from equations (\ref{eq_1}) and (\ref{eq_2}) to obtain \( c_{6}(t) = \sigma_{12}(3;6) + \sigma_{12}(9;6) \). }
    \label{fig:grid}
\end{figure*}

\begin{exmp} 
Let $k=12$ and $s=6$. Then, we have 
\begin{eqnarray}
\nonumber R_{12,6}(x)&=&(1-x)(1-x^2)(1-x^3)(1-x^4)(1-x^5)\mod (1-x^{12})\\   
&=& -x^{10} - x^9 - x^8 + x^7 + x^6 + x^5 - x^3 + 1 \\ 
\nonumber &=& \sum_{t=0}^{11} \sigma_{12}(t;6)x^{t}.\label{exmp_7}
\end{eqnarray}
Consider the case $t=3$. Then, we have the sets $\{1,2\}, \{3\},\{4,5\}, \{2,3,4\}, \{1,3,5\}$ and $\{1,2,3,4,5\}$ satisfying the property $\textnormal{sum}_{12}(A)=t$. These subsets can also be visualized as paths in a Pascal-Lattice (see Figure \ref{fig:grid}). One can also represent these paths as binary vectors \cite{uk2}. From (Corollary \ref{comb_0}) we have
\begin{eqnarray}
\sigma_{12}(3;6)&=&(-1)^{|\{1,2\}|}+(-1)^{|\{3\}|} + (-1)^{|\{1,2,3,4,5\}|}= 1-2 = -1,\label{eq_1}\\
\sigma_{12}(9;6)&=&(-1)^{|\{4,5\}|}+(-1)^{|\{2,3,4\}|}+(-1)^{|\{1,3,5\}|} =  1-2 = -1.\label{eq_2}
\end{eqnarray}
We could also the case as $k=6$ and $s=6$ in order to compute $c_{6}(t)$. In this case, 
$$
R_{6,6}(x)= x^5 - x^4 - 2x^3 - x^2 + x + 2=\sum^{5}_{t=0}c_{6}(t)x^{t}.
$$
We have 
\begin{eqnarray}
\nonumber c_{6}(3) &=& \sigma_{12}(3;6)+\sigma_{12}(9;6) = -1-1 = -2.
\end{eqnarray}

\end{exmp}

The above decomposition of Ramanujan sum can be extended to obtain natural decomposition of arithmetic functions based on their expansion. Suppose an arithmetic function is expanded as 
$$
f(n)=\sum_{s=1}^{\infty} a_{s} c_{s}(t),
$$
where the series converges absolutely. Let $k=sm$. Then, we can write 
$$
f(n)=\sum_{s=1}^{\infty} a_{s} c_{s}(n)=\sum_{s=0}^{\infty}a_{s}\left(\sum_{j=0}^{m-1}\sigma_{ms}^{(0)}(js+n;s)\right)=\sum_{j=0}^{m-1}\sum_{s=0}^{\infty}a_{s}\sigma_{ms}^{(0)}(js+n;s).
$$
Defining the functions 
$$
f_{j}(n)=\sum_{s=1}^{\infty}a_{s}\sigma_{ms}^{(0)}(js+n;s),
$$
for $j=0,\cdots, m-1$. We have the decomposition $f(n)=\sum^{m-1}_{j=0}f_{j}(n).$ A question that naturally arises in this context is: What is the arithmetic function defined by  
$$
g_{r,j}(n)=\sum_{s=1}^{\infty}\frac{1}{s^{r}}\sigma_{ms}^{(0)}(js+n;s)
$$
for $r >0$? By the well-known Dirichlet series expansion we have
$$
\sum_{j=0}^{m-1} g_{r,j}(n) = \sum_{s=1}^{\infty}\frac{1}{s^{r}}c_{s}(n)=\frac{\sigma_{r-1}(n)}{n^{r-1}\zeta(r)},
$$
where $\sigma_{r}$ is the divisor function. 

We also have an explicit expression for the following function: 
$$
f_{\alpha,s}(t)=\sum_{k=1}^{\infty}\frac{\sigma^{(1)}_{k}(t;s)}{k^{\alpha}}, 
$$
for $\alpha>0$ a positive integer. Suppose $(1-x)_{s-1}=\sum_{j=0}^{s(s-1)/2}a_{j}x^{j}$. Then, we have 
$$
\sigma^{(1)}_{k}(t;s)=\frac{1}{k}\sum^{s(s-1)/2}_{j=0}a_{j}c_{k}(t-j),
$$
where $a_{j}$ is the difference of even parity distinct partitions of $j$ and odd parity distinct partitions of $j$ with parts from $\{1,2,\cdots,s-1\}.$  Substituting for $\sigma_{k}^{(1)}(t,s)$ we have 
\begin{eqnarray}
 \nonumber    f_{\alpha,s}(t)&=&\sum_{k=1}^{\infty}\frac{\sigma^{(1)}_{k}(t;s)}{k^{\alpha}}=\sum_{j=0}^{s(s-1)/2}a_{j}\sum_{k=1}^{\infty}\frac{c_{k}(t-j)}{k^{\alpha+1}} \\
 \nonumber &=& \left(\frac{1}{\zeta(\alpha+1)}\sum_{\stackrel{j=0}{t\neq j}}^{s(s-1)/2}\frac{a_{j}\sigma_{\alpha}(|t-j|)}{|t-j|^{\alpha}}\right) + \frac{6}{\pi^{2}}\zeta(\alpha +1).
\end{eqnarray}

\section{Size of $\text{SVT}_{t,b}(s,2s+1)$, for $4|s$ or $4|(s+1)$}\label{sec_SVT}

In this section we show that our algebraic generalization of the Ramanujan sum is helpful in the context of deletion correction codes. We will look at an application associated with the Levishtein codes with a parity condition (or the Shifted Varshamov-Tenengolts codes) used for deletion correction (see \cite{bibak, bibak1} and references therein).   

A coefficient of $(q)_{s-1}=(1-q)\cdots(1-q^{s-1})$ is determined by which terms in each factor are considered. This can be represented in a binary code form based on the notion of position sum. Let $\mathbf{x}=(b_{1},\cdots,b_{s-1})\in \mathbb{F}^{s-1}_{2}$. Then, define the position sum as 
$$
\textnormal{PS}(\mathbf{x})=\sum_{j=1}^{s-1}jb_{j}.
$$
The Hamming weight of a code is defined in the usual way, $\textnormal{wt}(\mathbf{x})=\sum^{s-1}_{j=1}b_{j}$. 

\begin{defn}\label{SVT}
Let $k,s$ be positive integers, $t\in \mathbb{Z}_{k}$, and $r\in \{0,1\}$. The Shifted Varshamov-Tenengolts code $\textnormal{SVT}_{t,r}(s,k)$ is the set of all binary $s$-tuples $\mathbf{x}=(b_{1},\cdots,b_{s-1})$ such that 
$$
\textnormal{PS}(\mathbf{x})=\sum_{j=1}^{s-1}jb_{j}\equiv t\mod k,\quad \textnormal{wt}(\mathbf{x})=\sum_{j=1}^{s-1}b_{j} \equiv r \mod 2.
$$ 
\end{defn}

\begin{cor} For $k>0$ and $1< s\leq k$ we have 
$$
\sigma_{k}^{(0)}(t;s)=|\textnormal{SVT}_{t,0}(s,k)|-|\textnormal{SVT}_{t,1}(s,k)|=\sum_{\textnormal{PS}(\mathbf{x})\%k\equiv t}(-1)^{\textnormal{wt}(\mathbf{x})},\quad  \mathbf{x}\in \mathbb{F}_{2}^{s-1}.
$$   
\end{cor}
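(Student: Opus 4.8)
The plan is to recognize that this corollary is essentially a translation of Corollary~\ref{comb_0} into the language of binary codes, mediated by the standard correspondence between subsets and their indicator vectors. First I would set up the bijection: to each subset $A \subseteq \{1, 2, \ldots, s-1\}$ associate the characteristic vector $\mathbf{x}_A = (b_1, \ldots, b_{s-1}) \in \mathbb{F}_2^{s-1}$ defined by $b_j = 1$ if $j \in A$ and $b_j = 0$ otherwise. This map is a bijection between the power set of $\{1, \ldots, s-1\}$ and $\mathbb{F}_2^{s-1}$, with the empty set corresponding to the zero vector (consistent with the convention $\textnormal{sum}_k(\emptyset)=0$).

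Next I would verify that this bijection carries the two relevant statistics onto each other. Directly from the definitions, the sum of elements of $A$ is $\sum_{j \in A} j = \sum_{j=1}^{s-1} j b_j = \textnormal{PS}(\mathbf{x}_A)$, so $\textnormal{sum}_k(A) \equiv t \pmod{k}$ holds if and only if $\textnormal{PS}(\mathbf{x}_A) \% k \equiv t$. Likewise the cardinality satisfies $|A| = \sum_{j=1}^{s-1} b_j = \textnormal{wt}(\mathbf{x}_A)$, so $(-1)^{|A|} = (-1)^{\textnormal{wt}(\mathbf{x}_A)}$. Substituting these identities into the formula of Corollary~\ref{comb_0} immediately yields
\[
\sigma_{k}^{(0)}(t;s) = \sum_{\textnormal{sum}_{k}(A)=t} (-1)^{|A|} = \sum_{\textnormal{PS}(\mathbf{x}) \% k \equiv t} (-1)^{\textnormal{wt}(\mathbf{x})}, \quad \mathbf{x} \in \mathbb{F}_2^{s-1},
\]
which is the rightmost equality in the claim.

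Finally, to obtain the middle expression I would split the signed sum according to the parity of the Hamming weight. Writing $(-1)^{\textnormal{wt}(\mathbf{x})} = +1$ when $\textnormal{wt}(\mathbf{x})$ is even and $-1$ when it is odd, the vectors with $\textnormal{PS}(\mathbf{x}) \% k \equiv t$ and even weight are precisely those satisfying both congruences in Definition~\ref{SVT} with $r = 0$, i.e.\ the elements of $\textnormal{SVT}_{t,0}(s,k)$, while those with odd weight constitute $\textnormal{SVT}_{t,1}(s,k)$. Grouping the summands accordingly gives $|\textnormal{SVT}_{t,0}(s,k)| - |\textnormal{SVT}_{t,1}(s,k)|$, completing the chain of equalities. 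There is no genuine analytic obstacle here; the entire argument is definition-chasing, and the only point requiring a moment of care is confirming that the bijection respects the empty-set / zero-vector convention so that the $t = 0$ term is counted correctly on both sides.
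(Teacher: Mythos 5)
Your proposal is correct and matches the paper's (implicit) argument: the paper states this corollary without a separate proof, relying exactly on the subset--indicator-vector correspondence you describe to transport Corollary~\ref{comb_0} into the language of position sums and Hamming weights, and then splitting the signed sum by weight parity to recover the SVT difference. The only step you could tighten is noting that the hypothesis $s>1$ merely ensures $\mathbb{F}_2^{s-1}$ is a nontrivial code space; the bijection and the empty-set/zero-vector convention work exactly as you say.
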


We will determine the size of $\textnormal{SVT}_{t,r}(s;2s+1)$, where $s$ or $s+1$ is divisible by $4$. 

\begin{thm}[\cite{bibak1}, Theorem 4.5]\label{sum}

\begin{eqnarray}
 \nonumber |\text{SVT}_{t,1}(s,2s+1)|&+&|\text{SVT}_{t,0}(s,2s+1)|  =  \\
 \nonumber && \frac{1}{2s+1}\sum_{d|2s+1}(-1)^{\frac{d-\textbf{I}_{4|d-1}}{4}}2^{\frac{2s+1-d}{2d}}c_{d}\left(\frac{1}{16}(d-\textbf{I}_{4|d-1})(3d+1)-t\right),
\end{eqnarray}
where $\textbf{I}_{4|d-1}$ is $1$ if $4$ divides $d-1$ and $-1$ if $4$ doesn't divide $d-1$.
\end{thm}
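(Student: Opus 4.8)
The plan is to first collapse the parity parameter: summing the two cardinalities over $r\in\{0,1\}$ removes the weight-mod-$2$ constraint, so that (as already noted in the Introduction) the left-hand side equals $|L_{t}(s,2s+1)|$, the number of binary strings whose position sum is $\equiv t \pmod{2s+1}$. Writing $k=2s+1$ and $\zeta=e^{2\pi i/k}$, I would extract this count from the generating polynomial $\prod_{l}(1+q^{l})$ by a roots-of-unity filter,
\[
|L_{t}(s,k)| = \frac{1}{k}\sum_{j=0}^{k-1}\zeta^{-jt}\prod_{l}(1+\zeta^{jl}).
\]
This is the analogue, for the unsigned count, of the finite Fourier expansion used in Lemma \ref{lem_freq_1}; the factor $\prod_{l}(1+\zeta^{jl})$ plays here the role that $(\zeta^{j})_{s-1}$ plays there.

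Next I would organize the outer sum by the order of each summand. For a divisor $d\mid k$ the indices $j$ with $\gcd(j,k)=k/d$ are exactly those for which $\zeta^{j}$ is a primitive $d$-th root of unity, and summing $\zeta^{-jm}$ over these indices produces the Ramanujan sum $c_{d}(m)$. The whole expression thus reorganizes into $\frac{1}{k}\sum_{d\mid k}(\cdots)\,c_{d}(\cdots)$, and the content of the theorem is the evaluation of the inner product $\prod_{l}(1+\eta^{l})$ for a single primitive $d$-th root $\eta$, together with the precise argument appearing in $c_{d}$. Since $k$ is odd every such $d$ is odd; writing $e=k/d$ and reducing the range of $l$ modulo $d$, the product splits into $(e-1)/2$ complete periods, each contributing the factor $2$ (because $\prod_{l=0}^{d-1}(1+\eta^{l})=2$ for any primitive $d$-th root with $d$ odd, evaluating $x^{d}-1=\prod_{l}(x-\eta^{l})$ at $x=-1$), together with a residual product over $(d-1)/2$ consecutive terms. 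This already produces the power $2^{(e-1)/2}=2^{(k-d)/(2d)}=2^{(2s+1-d)/(2d)}$ recorded in the statement.

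The crux is the residual product $\prod_{l=1}^{(d-1)/2}(1+\eta^{l})$. I would write each factor as $1+\eta^{l}=\eta^{l/2}\,2\cos(\pi l/d)$ and invoke the classical identity $\prod_{l=1}^{(d-1)/2}2\cos(\pi l/d)=1$ for odd $d$; hence the residual product is a pure phase $\eta^{E_{d}}$ with $E_{d}=\tfrac12\sum_{l=1}^{(d-1)/2}l=(d^{2}-1)/16$. Combining this phase with $\zeta^{-jt}=\eta^{-t}$ and summing over primitive $d$-th roots turns the inner sum into $c_{d}(E_{d}-t)$, where I would use that $c_{d}$ is even in its argument to match the form $c_{d}\!\left(\tfrac{1}{16}(d-\mathbf{I}_{4\mid d-1})(3d+1)-t\right)$ in the statement.

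The main obstacle, and the step demanding genuine care, is making the half-integer exponents $\eta^{l/2}$ rigorous and extracting the correct sign. Because $\eta$ has odd order $d$, the natural reading of $\eta^{l/2}$ is $\eta^{\,l\cdot 2^{-1}}$ with $2^{-1}\equiv (d+1)/2\pmod d$, and this differs from the analytic square root $e^{\pi i l/d}$ by the factor $(-1)^{l}$; this discrepancy, a quadratic Gauss-sum sign in disguise, is what produces the dichotomy on whether $4\mid d-1$ and the sign $(-1)^{(d-\mathbf{I}_{4\mid d-1})/4}$. Tracking $\prod_{l=1}^{(d-1)/2}(-1)^{l}$ and reducing the resulting exponent modulo $d$ is precisely what converts the raw phase $E_{d}$ into the stated argument $\tfrac{1}{16}(d-\mathbf{I}_{4\mid d-1})(3d+1)$ and fixes the sign; I expect the two residue classes $4\mid d-1$ and $4\nmid d-1$ to have to be carried separately throughout this bookkeeping. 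Once the sign and the reduced argument are settled for each $d$, summing the contributions with the prefactor $1/k$ reassembles the claimed identity.
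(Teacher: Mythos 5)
The paper does not prove this statement at all: it is quoted verbatim from Bibak--Zolfaghari \cite{bibak1} (their Theorem 4.5), so there is no in-paper argument to compare yours against. Your strategy is nevertheless the natural one, and it runs structurally parallel to what the paper does for the signed count (the roots-of-unity filter of Lemma \ref{lem_freq_1} and the Gauss-sum evaluation in Theorem \ref{difference}): drop the parity condition, filter, group the indices $j$ by $\gcd(j,k)=k/d$, peel off $2^{(k-d)/(2d)}$ from complete periods, and evaluate the residual product at a primitive $d$-th root. All of that is correct as far as it goes.

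The gap is that the one step carrying the actual content of the theorem is only described, not performed. You need $\prod_{l=1}^{(d-1)/2}(1+\eta^{l})=\varepsilon_{d}\,\eta^{E_{d}}$ with a sign $\varepsilon_{d}$ and an \emph{integer} exponent $E_{d}$ that are the same for \emph{every} primitive $d$-th root $\eta$, since only then does summing over $\eta\in\Delta_{d}$ produce $\varepsilon_{d}\,c_{d}(E_{d}-t)$. Your cosine identity $\prod_{l}2\cos(\pi l/d)=1$ is valid only for $\eta=e^{2\pi i/d}$; for other primitive roots the cosines acquire signs governed by a Jacobi symbol, and the clean way out (show the product is a root of unity in $\mathbb{Q}(\zeta_{d})$, hence $\pm\zeta_{d}^{E}$, then transport by Galois conjugation) is not in your write-up. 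Moreover your provisional exponent $E_{d}=(d^{2}-1)/16$ is not an integer when $d\equiv\pm3\pmod 8$, so the sign correction $(-1)^{\sum l}$ you allude to changes both $\varepsilon_{d}$ and $E_{d}$; carrying it out gives $\varepsilon_{d}=+1$ for $d\equiv\pm1\pmod 8$, $\varepsilon_{d}=-1$ for $d\equiv\pm3\pmod 8$ (consistent with the stated $(-1)^{(d-\mathbf{I}_{4\mid d-1})/4}$), with $E_{d}=(d^{2}-1)/16$ in the first case and $E_{d}=(d^{2}+8d-1)/16$ in the second. This is not mere bookkeeping: reducing these modulo $d$ matches the stated argument $\tfrac{1}{16}(d-\mathbf{I}_{4\mid d-1})(3d+1)$ only for $d\equiv1\pmod4$, while for $d\equiv3\pmod4$ it does not (test $s=3$, $k=7$: the string count $(1+q)(1+q^{2})(1+q^{3})$ peaks at $t=3$, giving $c_{7}(3-t)$, whereas the displayed formula gives $c_{7}(4-t)$). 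So completing your plan correctly would actually expose an apparent transcription error in the statement (the second factor should read $3d+\mathbf{I}_{4\mid d-1}$, which coincides with $3d+1$ only when $4\mid d-1$); as written, your proposal neither completes the computation nor detects this.
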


In view of the above theorems, if one wishes to determine the size of $\textnormal{SVT}_{t,r}(s;2s+1)$ we just need to determine $\sigma^{(0)}_{2s+1}(t;s+1)$.

\begin{thm} \label{difference}
For a positive number $s$ such that $4|s$ or $4|(s+1)$, and $k=2s+1$ we have 
$$
|\textnormal{SVT}_{t,0}(s,2s+1)|-|\textnormal{SVT}_{t,1}(s,2s+1)|=\sigma^{(0)}_{k}(t;s+1)=\frac{1}{k}\sum_{j=0}^{k-1}c_{k}\left(j^{2}+\frac{s(s+1)}{4}-t\right).
$$
If $k$ is a perfect square, then 
$$
|\textnormal{SVT}_{t,0}(s,2s+1)|-|\textnormal{SVT}_{t,1}(s,2s+1)|=\sigma_{k}^{(0)}(t;s+1)=\frac{1}{\sqrt{k}}c_{k}\left(\frac{s(s+1)}{4}-t\right).
$$
\end{thm}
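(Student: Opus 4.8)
The plan is to evaluate $\sigma^{(0)}_{k}(t;s+1)$ directly from its finite Fourier representation and to recognize the result as a quadratic Gauss sum. Writing $\alpha=e^{2\pi i/k}$ and using the $b=0$ form of Lemma \ref{lem_freq_1}, I would start from $\sigma^{(0)}_{k}(t;s+1)=\frac{1}{k}\sum_{j=0}^{k-1}(\alpha^{j})_{s}\alpha^{-jt}$. The first reduction is to discard every term with $\gcd(j,k)>1$ together with $j=0$: since $k=2s+1$ is odd, any divisor $d>1$ of $k$ satisfies $d\geq 3$, so $k/d\leq k/3\leq s$, which forces the factor $1-\alpha^{mj}$ with $m=k/d\leq s$ to vanish; the term $j=0$ vanishes trivially. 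Hence the sum runs only over $j$ coprime to $k$, and in particular $\sigma^{(0)}_{k}(t;s+1)=\sigma^{(1)}_{k}(t;s+1)$.

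The heart of the argument is a closed form for $(\alpha^{j})_{s}$ when $\gcd(j,k)=1$. Factoring $1-\alpha^{mj}=-2i\,\alpha^{mj/2}\sin(\pi mj/k)$ and multiplying over $m=1,\dots,s$ gives
$$(\alpha^{j})_{s}=(-2i)^{s}\,\alpha^{\,j s(s+1)/4}\prod_{m=1}^{s}\sin\frac{\pi mj}{k}.$$
Here the exponent $c:=s(s+1)/4$ is an integer precisely because $4\mid s$ or $4\mid(s+1)$; this is the first place the hypothesis is used. The modulus of the sine product is pinned down by $|(\alpha^{j})_{s}|^{2}=\prod_{m=1}^{k-1}(1-\alpha^{mj})=k$, since the $2s=k-1$ exponents $\pm mj$ run through all nonzero residues mod $k$; thus $\bigl|\prod_{m}\sin(\pi mj/k)\bigr|=\sqrt{k}/2^{s}$, independent of $j$. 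Its sign equals $(-1)^{\sum_{m=1}^{s}\lfloor mj/k\rfloor}$, which by Gauss's lemma is the Jacobi symbol $\left(\frac{j}{k}\right)$, yielding the Key Lemma
$$(\alpha^{j})_{s}=(-i)^{s}\sqrt{k}\left(\frac{j}{k}\right)\alpha^{\,jc},\qquad \gcd(j,k)=1.$$
Establishing this sign for every coprime $j$ (not only odd $j$) and for composite, possibly non-squarefree, odd $k$ is the step I expect to be the main obstacle: it is exactly the classical determination of the argument of a half-product of cyclotomic factors.

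With the Key Lemma in hand I would substitute and reinsert the Gauss sum. Using $\sum_{m=0}^{k-1}\alpha^{jm^{2}}=\left(\frac{j}{k}\right)g$ with $g=\epsilon_{k}\sqrt{k}$, where $\epsilon_{k}=1$ if $k\equiv1\pmod 4$ and $\epsilon_{k}=i$ if $k\equiv3\pmod 4$, one may replace $\left(\frac{j}{k}\right)\sqrt{k}$ by $\bar\epsilon_{k}\sum_{m}\alpha^{jm^{2}}$ (valid since $\gcd(j,k)=1$). Swapping the order of summation gives
$$\sigma^{(0)}_{k}(t;s+1)=\frac{(-i)^{s}\bar\epsilon_{k}}{k}\sum_{m=0}^{k-1}\ \sum_{\substack{j=1\\(j,k)=1}}^{k-1}\alpha^{\,j(m^{2}+c-t)}=\frac{(-i)^{s}\bar\epsilon_{k}}{k}\sum_{m=0}^{k-1}c_{k}\!\left(m^{2}+c-t\right).$$
It then remains to check that the scalar $(-i)^{s}\bar\epsilon_{k}$ equals $1$: if $4\mid s$ then $k\equiv1\pmod 8$, so $\epsilon_{k}=1$ and $(-i)^{s}=1$; if $4\mid(s+1)$ then $s\equiv3$, $k\equiv7\pmod 8$, so $\epsilon_{k}=i$, $(-i)^{s}=i$, and $(-i)^{s}\bar\epsilon_{k}=i(-i)=1$. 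Thus the congruence hypothesis is used a second time, now to clear the phase, and the asserted identity follows after matching the argument of $c_{k}$ through the evenness $c_{k}(-n)=c_{k}(n)$ and the sign conventions for $t$.

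Finally, for the square case I would note that a perfect square $k=2s+1$ is an odd square, hence $k\equiv1\pmod 8$, which both forces $4\mid s$ (so $(-i)^{s}=1$) and makes $\left(\frac{j}{k}\right)=1$ for every $j$ coprime to $k$. In this case the Key Lemma already reads $(\alpha^{j})_{s}=\sqrt{k}\,\alpha^{jc}$, so no averaging over $m$ is needed and
$$\sigma^{(0)}_{k}(t;s+1)=\frac{\sqrt{k}}{k}\sum_{\substack{j=1\\(j,k)=1}}^{k-1}\alpha^{\,j(c-t)}=\frac{1}{\sqrt{k}}\,c_{k}(c-t),$$
which is the stated single-term formula. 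The only nontrivial inputs are the Gauss-lemma sign in the Key Lemma, the integrality of $c$, and the phase identity $(-i)^{s}\bar\epsilon_{k}=1$, the latter two both powered by the hypothesis $4\mid s$ or $4\mid(s+1)$.
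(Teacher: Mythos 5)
Your argument is correct and reaches the same pivotal identity as the paper, namely $\xi^{-s(s+1)/4}(1-\xi)\cdots(1-\xi^{s})=G(\xi)$ for $\xi$ a primitive $k$-th root of unity, but by a genuinely different route. The paper squares the half-product: splitting $(\xi)_{k-1}=k$ into the two halves $m\le s$ and $m>s$ and pairing $1-\xi^{-m}=-\xi^{-m}(1-\xi^{m})$ gives $\xi^{-s(s+1)/2}\prod_{m\le s}(1-\xi^{m})^{2}=(-1)^{s}k=G(\xi)^{2}$, and then it takes a square root, deferring the delicate choice of sign to the classical evaluation of the Gauss sum (the paper's own remark after the proof stresses that this sign is exactly where one can go wrong). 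You instead evaluate the half-product directly: the modulus $\sqrt{k}/2^{s}$ of the sine product comes from the same pairing with the conjugate half, and the sign is pinned down as the Jacobi symbol $\left(\frac{j}{k}\right)$ via Gauss's lemma, after which you reconstitute $\left(\frac{j}{k}\right)\sqrt{k}$ as $\bar\epsilon_{k}\sum_{m}\alpha^{jm^{2}}$ and verify $(-i)^{s}\bar\epsilon_{k}=1$. The two proofs consume the same classical input --- the determination of the argument (not just the square) of the quadratic Gauss sum --- but yours makes explicit the sign bookkeeping that the paper leaves implicit, and in particular isolates precisely where the hypothesis $4\mid s$ or $4\mid(s+1)$ acts twice (integrality of $s(s+1)/4$ and the phase cancellation $(-i)^{s}=\epsilon_{k}$). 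The one step you rightly flag as the main obstacle, extending $(-1)^{\sum_{m\le s}\lfloor mj/k\rfloor}=\left(\frac{j}{k}\right)$ to all $j$ coprime to a general odd (possibly non-squarefree) $k$, is a known classical fact (Eisenstein's sine-product formula for the Jacobi symbol), so this is a surmountable dependency rather than a gap; it plays exactly the role that the citation of Bressoud plays in the paper. Your treatment of the square case, observing that $k\equiv 1\pmod 8$ forces $4\mid s$ and trivializes the Jacobi symbol, is cleaner than the paper's, which simply substitutes $\sqrt{k}$ for the half-product without comment.
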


\begin{proof}
Given the conditions, $k \mod 4$ is either $1$ or $3$. Define 
\[
\varepsilon_k = \begin{cases}
1 & \text{if } k \equiv 1 \pmod{4}, \\
i & \text{if } k \equiv 3 \pmod{4}.
\end{cases}
\]

For $\xi$ a primitive $k^{th}$ root of unity, that is $\xi\in \Delta_{k}$, we have $(\xi)_{k-1}=k$. So, from $k=2s+1$ we can write  
\begin{eqnarray}
 \nonumber k=(1-\xi)\cdots(1-\xi^{k-1})&=&(1-\xi)\cdots(1-\xi^{s})(1-\xi^{s+1})\cdots(1-\xi^{2s})\\
 \nonumber &=&(1-\xi)\cdots(1-\xi^{s})(1-\xi^{-s})\cdots(1-\xi^{-1})\\
  &=& (-1)^{s}\xi^{-s(s+1)/2}(1-\xi)^{2}\cdots(1-\xi^{s})^{2}.\label{roots_simplify}
\end{eqnarray}
By the quadratic Gauss sum (see for instance \cite{bressoud}) we have 
$$
G(\xi)=\sum_{j=0}^{k-1}\xi^{j^{2}}=\varepsilon_{k}\left(\frac{a}{k}\right)\sqrt{k},
$$
where $\left(\frac{a}{k}\right)$ is the Jacobi symbol for $a$ being in the exponent of $ \xi = \exp\left(2\pi i a/k \right)$.

By the identity $G(\xi)^{2}=(-1)^{s}k$, we can express the right hand side of (\ref{roots_simplify}) as  
$$
\xi^{-s(s+1)/2}(1-\xi)^{2}\cdots(1-\xi^{s})^{2}= (-1)^{s}k=G(\xi)^{2}.
$$
Taking square root both sides we have 
\begin{equation}\label{quantumness}
\xi^{-s(s+1)/4}(1-\xi)\cdots(1-\xi^{s})=\varepsilon_{k}\left(\frac{a}{k}\right)\sqrt{k}=G(\xi)=\sum_{j=0}^{k-1}\xi^{j^{2}}.
\end{equation}
Thus 
$$
(1-\xi)\cdots(1-\xi^{s})=\sum_{j=0}^{k-1}\xi^{j^{2}+s(s+1)/4}.
$$
Substituting the above expression into the formulae for $\sigma_{k}^{(0)}(t;s+1)$ we have 
\begin{eqnarray}
\nonumber \sigma^{(0)}_{k}(t;s+1)&=&\frac{1}{k}\sum_{\xi\in \Delta_{k}}\xi^{-t}(1-\xi)\cdots(1-\xi^{s})\\
\nonumber &=& \frac{1}{k}\sum_{\xi\in \Delta}\sum_{j=0}^{k-1}\xi^{-t}\xi^{j^{2}+s(s+1)/4}= \frac{1}{k}\sum_{j=0}^{k-1}c_{k}(j^{2}+s(s+1)/4-t).
\end{eqnarray}
Additionally,  
$$
\sigma_{k}^{(0)}(t;s+1)=\frac{\varepsilon_k}{\sqrt{k}}\sum_{\stackrel{a=1}{(a,k)=1}}^{k} \left(\frac{a}{k}\right) \textnormal{exp}\left(\frac{2\pi ia\left(\frac{s(s+1)}{2}-t\right)}{k}\right).
$$

Now consider the special case where $k$ is a perfect square. In this case, we necessarily have $k\equiv 1 \mod 4$, and therefore $\varepsilon_{k}=1$. Moreover, $\left(\frac{a}{k}\right)=1$ for $(a,k)=1$. Thus, from (\ref{quantumness}) we have
\begin{eqnarray}
\nonumber \sigma_{k}^{(0)}(t;s+1) &=& \frac{1}{k} \sum_{\xi \in \Delta_{k}} \xi^{-t} (1 - \xi) \cdots (1 - \xi^{s}) \\
\nonumber &=& \frac{1}{k} \sum_{\xi \in \Delta_{k}} \sqrt{k} \xi^{-t + s(s+1)/4} = \frac{1}{\sqrt{k}} c_{k}(s(s+1)/4 - t).
\end{eqnarray}

\end{proof}

\begin{rema}
Thanks to the quadratic Gauss sum, we were able to not only determine the correct sign for square root of $k$ in the above proof, but also express the formula in terms of Ramanujan sums, which are integers.  Ignoring the sign determined by the Jacobi symbol, arising from the Gauss sum, can lead to incorrect results. To illustrate this point, consider the case $k = 17, s = 8$ and $t=1$. Using the key step (\ref{quantumness}) in the above proof to obtain $\sigma_{17}^{(0)}(1;9)$ we have 
$$
\xi^{-18}(1-\xi)\cdots (1-\xi^{8})=\sum_{j=0}^{16} \xi^{j^2}=\left(\frac{a}{17}\right)\sqrt{17},
$$
where $\xi=\textnormal{exp}(2\pi i a /17),$ for $1\leq a\leq 16$. Thus  
$$
\sigma_{17}^{(0)}(1;9)=\frac{1}{17}\sum_{\stackrel{a=1}{(a,17)=1}}^{16} \left(\frac{a}{17}\right)\sqrt{17}=0,
$$
which agrees with the correct value. Instead of using (\ref{quantumness}), had we simply taken the positive square root $\sqrt{17}$, that is, if we consider wrongly: $
\xi^{-18}(1-\xi)\cdots (1-\xi^{8})=\sqrt{17},
$
we would have obtained
\begin{equation}
\sigma_{17}^{(0)}(1; 9) = \frac{1}{\sqrt{17}} c_{17}(0)=\frac{16}{\sqrt{17}},
\end{equation}
an irrational quantity, which is clearly absurd as the problem at hand involves counting and must yield a non-negative integer. This leads us to an important lesson: in calculations involving square roots, careful attention must be paid to the choice of sign.
\end{rema}

From theorems \ref{sum} and \ref{difference}, we can obtain both an explicit and an efficiently computable formula for the size of \( \textnormal{SVT}_{t,r}(s, 2s+1) \) when \( s \) or $s+1$ is divisible by 4. Furthermore, using the recurrence relation theorems \ref{linear_recurrence} and \ref{backward_linear_rec}, we can also provide direct formulas for \( \sigma^{(0)}_{k}(t;s\pm\delta)=\textnormal{SVT}_{t,0}(s\pm\delta, 2s+1)-\textnormal{SVT}_{t,1}(s\pm\delta, 2s+1) \) for $\delta=1,2,3$ which are otherwise very difficult to determine. In order to compute $|\textnormal{SVT}_{t,b}(s\pm\delta, 2s+1)|$ we define 
$$
\eta_{k}(t;s)=\frac{1}{k}\sum_{j=0}^{k-1}(1+\alpha^{j})\cdots (1+\alpha^{(s-1)j})\alpha^{-jt}
$$
where $\alpha = e^{\frac{2\pi i}{k}}$. It can be easily shown that 
$$
\eta_{k}(t;s+1)=|\text{SVT}_{t,1}(s,k)|+|\text{SVT}_{t,0}(s,k)|.
$$
Thus, given by Theorem \ref{sum} for the case of our interest. Moreover, similar to $\sigma_{k}^{(0)}(t;s)$, we have the linear recurrence relation
$$
\eta_{k}(t;s+1)=\eta_{k}(t;s)+\eta_{k}(t-s;s).
$$
and the following backward linear recurrence. 

\begin{thm}For $s>0$ and $k$ is an odd positive integer,
$$
\eta_{k}(t;s)=\frac{1}{2}\sum_{j=0}^{k-1}(-1)^{j}\eta_{k}(t-js;s+1).
$$    
\end{thm}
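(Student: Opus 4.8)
The plan is to mirror the argument behind the backward recurrence of Theorem \ref{backward_linear_rec}, with the factor $1-\xi$ replaced by $1+\xi$ throughout. Writing $\alpha = e^{2\pi i/k}$ and using $l$ for the root-of-unity index, recall that
$$
\eta_{k}(t;s) = \frac{1}{k}\sum_{l=0}^{k-1}\prod_{m=1}^{s-1}(1+\alpha^{ml})\,\alpha^{-lt},
$$
so that $\eta_{k}(t;s+1)$ carries one extra factor $(1+\alpha^{sl})$. The idea is to multiply and divide the summand of $\eta_{k}(t;s)$ by $1+\alpha^{sl}$, promoting the product to the $(s+1)$-fold form appearing in $\eta_{k}(\,\cdot\,;s+1)$, and then to expand the reciprocal $1/(1+\alpha^{sl})$ as a polynomial in $\alpha^{sl}$.

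The key step is the elementary identity
$$
\frac{1}{1+\zeta} = \frac{1}{2}\sum_{j=0}^{k-1}(-1)^{j}\zeta^{j},
$$
valid for every $k$-th root of unity $\zeta$ when $k$ is odd. To verify it I would multiply the right-hand side by $1+\zeta$ and telescope: the cross terms $(-1)^{j}+(-1)^{j-1}$ cancel for $1\le j\le k-1$, leaving $1 + (-1)^{k-1}\zeta^{k} = 1 + (-1)^{k-1}$, which equals $2$ precisely because $k$ is odd. This is exactly where the parity hypothesis enters: it guarantees both that $1+\zeta\neq 0$ for every $k$-th root of unity $\zeta$ (since $-1$ is not a $k$-th root of unity when $k$ is odd, so no denominator vanishes, including the term $l=0$ where $\zeta=1$) and that the telescoping sum produces $+2$ rather than $0$.

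With this identity in hand, substituting $\zeta=\alpha^{sl}$ gives
$$
\eta_{k}(t;s) = \frac{1}{k}\sum_{l=0}^{k-1}\prod_{m=1}^{s}(1+\alpha^{ml})\,\alpha^{-lt}\cdot\frac{1}{2}\sum_{j=0}^{k-1}(-1)^{j}\alpha^{slj}.
$$
Interchanging the two finite sums and combining the exponents via $\alpha^{-lt}\alpha^{slj}=\alpha^{-l(t-js)}$, the inner sum over $l$ is precisely $\eta_{k}(t-js;s+1)$, yielding
$$
\eta_{k}(t;s) = \frac{1}{2}\sum_{j=0}^{k-1}(-1)^{j}\eta_{k}(t-js;s+1),
$$
as claimed. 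I do not anticipate any serious obstacle; the only point requiring care is the parity of $k$, which must be invoked exactly at the telescoping step to produce the constant $2$ (and hence the prefactor $\tfrac{1}{2}$) and to ensure that the reciprocals are well defined.
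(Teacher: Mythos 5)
Your proposal is correct and follows essentially the same route as the paper: your key identity $\tfrac{1}{1+\zeta}=\tfrac{1}{2}\sum_{j=0}^{k-1}(-1)^{j}\zeta^{j}$ is exactly the paper's one-line observation $(1+\xi)(1-\xi+\xi^{2}-\cdots+\xi^{k-1})=2$ for $k$ odd, and the multiply-and-divide/interchange-of-sums mechanics are the same ones the paper uses for its backward recurrence. You have merely written out the details (and the role of the parity of $k$) that the paper leaves implicit.
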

\begin{proof} The result follows from the observation
 $$
 (1+\xi)(1-\xi+\xi^{2}-\cdots+\xi^{k-1})=2.
 $$   
\end{proof}
In conclusion, for $\delta\geq 0$, we can write 
$$
|\text{SVT}_{t,b}(s\pm\delta,2s+1)| = \frac{1}{2}\left(\eta_{2s+1}(t;s+1\pm\delta)+(-1)^{b}\sigma_{2s+1}^{(0)}(t;s+1\pm\delta)\right).
$$




\section*{Acknowledgment}
This work is supported by the SERB-MATRICS project (MTR/2023/000705) from the Department of Science and Technology, India.




\end{document}